\documentclass[12pt,a4paper,leqno,oneside]{amsart}
\usepackage[utf8]{inputenc}
\usepackage[a4paper,hmargin={2.3cm,2.3cm},vmargin={3cm,3cm}]{geometry}
\usepackage{amsmath}
\usepackage{amssymb}
\usepackage{amsthm}
\usepackage{graphicx}
\usepackage{hyperref}

\pagestyle{plain}

\newtheorem{theorem}{Theorem}[section]
\newtheorem{corollary}[theorem]{Corollary}

\newtheorem{proposition}[theorem]{Proposition}
\newtheorem{lemma}[theorem]{Lemma}

\numberwithin{equation}{section}

\theoremstyle{remark}
\newtheorem{remark}[theorem]{Remark}

\setlength{\marginparwidth}{2cm}

\def\Ran{\mathop{\rm Ran}\nolimits}
\def\capacity{\mathop{\rm cap}\nolimits}
\def\diam{\mathop{\rm diam}\nolimits}
\def\dist{\mathop{\rm dist}\nolimits}

\def\Max{{\mathrm{max}}}

\def\good{{\mathrm{good}}}

\def\d{\mathrm{d}}
\def\barP{\bar{\mathbb P}}
\def\barE{\bar{\mathbb E}}
\def\bP{{\mathbf P}}

\def\bbP{{\bar{\mathbf P}}}

\def\bbV{{\mathbf V}}

\def\<{\langle}
\def\>{\rangle}

\overfullrule 1mm

\begin{document}

\title[Critical window for the vacant set]{Critical window for the vacant
  set left by random walk on random regular graphs}

  \author[J. Černý]{Jiří Černý$^1$}
  \thanks{$^1$ Department of Mathematics,
    ETH Zurich, Raemistrasse 101, 8092 Zurich, Switzerland}

  \author[A. Teixeira]{Augusto Teixeira$^2$}
  \thanks{$^2$ Department of Mathematics and their Applications,
    Ecole Normale Supérieure,
    45 rue d'Ulm, F-75230 Paris, France}

  \thanks{This research was partially supported by the AXA Research Fund
    Fellowship. A.T.~thanks the
    Forschungsinstitut für Mathematik (FIM) of ETH Zurich, where a part
    of this work has been completed, for its hospitality.}

  \date{10 January 2011}

\begin{abstract}
  We consider the simple random walk on a random $d$-regular
  graph with $n$ vertices, and investigate
  percolative properties of the set of vertices not visited by the walk
  until time $un$, where $u>0$ is a fixed positive parameter. It was shown
  in \cite{CTW11} that this so-called vacant set exhibits a phase
  transition at $u=u_\star$: there is a giant component if $u<u_\star$ and
  only small components when $u>u_\star$. In this paper we show the
  existence of a critical window of size $n^{-1/3}$ around $u_\star$.
  In this window the size of the largest cluster is of order~$n^{2/3}$.
\end{abstract}

\maketitle

\section{Introduction}

The study of percolative properties of the vacant set left by a random
walk on  finite graphs was initiated  by Benjamini and Sznitman
\cite{BS08} for the case of random walk on a high-dimensional discrete
torus $(\mathbb{Z}/N \mathbb{Z})^d$.  In \cite{BS08} it is proved that if
the random walk runs up to time $uN^d$, where $u$ is a small constant,
the vacant set has a giant component with volume of order $N^d$,
asymptotically as $N$ grows. On the other hand, if instead we consider a
large constant $u$, all components of the vacant set have a volume of
order at most $\log^\lambda N$ (for some $\lambda > 0$) as was proved in
\cite{TW10}. This shows the existence of two distinct phases for the
connectivity of the vacant set on the torus as $u$ varies. However, the
above mentioned works leave several open questions, such as whether the
transition between these two phases happens abruptly at a given critical
threshold and, if this is the case, how does the vacant set behave at the
critical point?

Such problems are much better understood when instead of the torus one
considers random $d$-regular graphs or $d$-regular large-girth expanders
on $n$ vertices. In this case, when random walks runs up to time $un$, it
is known that the vacant set exhibits a sharp phase transition
\cite{CTW11}: the size of the largest connected component of the vacant
set drops abruptly from order $n$ to order $\log n$ at a computable
critical value $u_\star$. In this paper we explore more closely this
phase transition, in particular we prove that the size of the largest
component of the vacant set exhibits a double-jump, similar to that
observed in Erdős-Rényi random graphs.

Let us now give a precise definition of the model. Let $\mathcal G_{n,d}$
be the set of all non-oriented $d$-regular simple graphs with $n$
vertices (here and later we tacitly assume that $n d$ is even).  Let
$\mathbb P_{n,d}$ be the uniform probability distribution on
$\mathcal G_{n,d}$. For any graph $G=(V,\mathcal E)$ let $P^G$ denote the
canonical law on the Skorokhod space $D([0,\infty),V)$ of a
continuous-time simple random walk on $G$ started from the uniform
distribution. We use $(X_t)_{t\ge 0}$ to denote the canonical coordinate
process. For a fixed parameter $u\ge 0$, we define the vacant set as the
set of all vertices not visited by the random walk before the time $u|V|$,
\begin{equation}
  \label{e:Vu}
  \mathcal V^u=\mathcal V^u_G=V\setminus \{X_t:0\le t\le u|V|\}.
\end{equation}
We use $\mathcal C^u_\Max$ to denote the maximal connected component of
the subgraph of $G$ induced by $\mathcal V^u$. The vacant set and in
particular its maximal connected component are the main objects of
investigation in this paper.

As proved in \cite{CTW11}, the phase transition in the connectivity of
the vacant set occurs at the value $u_\star$ given by
\begin{equation}
  \label{e:ustar}
  u_\star= \frac{d(d-1) \ln(d-1)}{(d-2)^2},
\end{equation}
and can be described as follows: Let $G_n$ be a graph distributed
according to $\mathbb P_{n,d}$. Then with $\mathbb P_{n,d}$-probability
tending to one as $n\to\infty$:
\begin{description}
  \item[Super-critical phase] For any $u<u_\star$ and $\sigma >0$ there exist
  $\rho$ and $c$ depending on $u$, $\sigma $, and $d$, such that
  \begin{equation}
    \label{e:supercritical}
    P^{G_n}[|\mathcal C^u_\Max|\ge \rho  n]\ge 1- cn^{-\sigma }.
  \end{equation}
  \item[Sub-critical phase] For any $u>u_\star$ and $\sigma >0$ there
  exist $K$ and $c$ depending on $u$, $\sigma $, and $d$, such that
  \begin{equation}
    \label{e:subcritical}
    P^{G_n}[|\mathcal C^u_\Max|\le K\log n]\ge 1- c n^{-\sigma }.
  \end{equation}
\end{description}

In this paper we study the behaviour of the vacant set in the vicinity of
the critical point. The main results of this paper are the following two
theorems. In their statement we use $\bP_{n,d}$ to denote the
\emph{annealed} measure
\begin{equation}
  \label{e:annealed}
  \bP_{n,d} (\cdot) = \int P^G(\cdot) \mathbb P_{n,d}(\d G).
\end{equation}
We say that an event $A$ occurs $\bP_{n,d}$-asymptotically almost surely
(or simply $\bP_{n,d}$-a.a.s.)~if $\lim_{n\to\infty}\mathbb \bP_{n,d}(A)=1$.

\begin{theorem}[Critical window]
  \label{t:criticalwindow}
  Let $(u_n)_{n\ge 1}$ be a sequence satisfying
  \begin{equation}
    |n^{1/3}(u_n-u_\star)|\le\lambda<\infty \quad
    \text{for all $n$ large enough.}
  \end{equation}
  Then for every $\varepsilon >0$ there
  exists $A=A(\varepsilon ,d,\lambda )$ such that for all $n$ large enough
  \begin{equation}
    \bP_{n,d}[A^{-1}n^{2/3}\le |\mathcal C^{u_n}_\Max| \le A n^{2/3}]\ge
    1-\varepsilon.
  \end{equation}
\end{theorem}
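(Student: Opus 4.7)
The plan is to exploit the fact that a random $d$-regular graph is locally a $d$-regular tree and to couple the exploration of a vacant cluster with a Galton--Watson tree whose offspring distribution $\xi_u$ satisfies $m(u):=\mathbb E\xi_u=1$ exactly at $u=u_\star$, with $m'(u_\star)\neq 0$ --- this being essentially the defining property of $u_\star$ in \eqref{e:ustar}. For $u_n$ in the critical window one then has $m(u_n)=1+\widetilde\lambda_n n^{-1/3}+O(n^{-2/3})$ with $|\widetilde\lambda_n|$ bounded, which places the cluster exploration in the classical near-critical regime producing components of size $\Theta(n^{2/3})$.

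\textbf{Step 1: exploration and branching-process coupling.} First I would explore the vacant cluster $\mathcal C^{u_n}(x)$ of a fixed vertex $x$ by breadth-first search on $G_n$, revealing simultaneously the graph structure and the status ``vacant versus visited by the walk before time $u_n n$'' of each new vertex. As long as the explored neighbourhood is tree-like --- which, by standard short-cycle counts for $\mathcal G_{n,d}$, holds up to volume $n^{1-\delta}$ for any $\delta>0$ --- the walk's restriction there can be analysed through its Green function on the $d$-regular tree, exactly as in \cite{CTW11}. This yields a coupling of the exploration counting process $(Z_k)$ with a sum of i.i.d.\ copies of $\xi_{u_n}$, so that $Z_k = k(m(u_n)-1)+M_k + \text{error}$ with $M_k$ a martingale of quadratic variation $\Theta(k)$.

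\textbf{Step 2: upper and lower bounds.} For the upper bound, standard barrier estimates for near-critical Galton--Watson processes with offspring mean $1+O(n^{-1/3})$ and bounded, nondegenerate variance imply that the total progeny exceeds $An^{2/3}$ with probability $O(1/\sqrt A)$. A union bound over the $n$ possible starting vertices then gives $\bP_{n,d}[|\mathcal C^{u_n}_\Max|\ge An^{2/3}]\le \varepsilon/2$ for $A$ large enough (note that the coupling is required only up to size $\ll n^{1-\delta}$, which comfortably contains $An^{2/3}$). For the lower bound I would use sprinkling: choose $\theta>0$ and set $u^+=u_n+\theta n^{-1/3}$, still in the window. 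A second-moment argument based on the same BP coupling produces, at level $u^+$, a collection of $\Theta(n^{1/3})$ disjoint medium vacant clusters of size at least $cn^{1/3}$. Reducing $u$ from $u^+$ down to $u_n$ adds to $\mathcal V$ the $\Theta(n^{2/3})$ distinct vertices first visited by the walk during the interval $[u_n n, u^+ n]$; a careful counting argument then shows these new vertices act as bridges that merge a positive fraction of the medium clusters into a single component of size at least $A^{-1}n^{2/3}$, from which Paley--Zygmund gives the claim with positive probability.

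\textbf{Main obstacle.} The principal difficulty throughout is that $\mathcal V^{u_n}$ is a functional of a single random-walk trajectory rather than an independent site percolation, so distant vertices are correlated through their common interaction with the walk. The branching-process coupling of Step 1 must therefore be pushed to volume $\gg n^{2/3}$ with only an $O(n^{-1/3})$ shift in the offspring mean, demanding very sharp local-time estimates and careful control of the error terms coming from cycles of $G_n$. The sprinkling step is particularly delicate: unlike in the Erd\H{o}s--R\'enyi case, the set of vertices ``added'' between $u^+$ and $u_n$ is not an independent Bernoulli field but the range of a correlated random-walk segment, so quantifying its merging effect on the medium clusters must be done using soft-local-time or hitting-time methods rather than the standard independent-edge sprinkling, and this, in my view, is the technical heart of the proof.
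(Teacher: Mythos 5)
Your route---direct breadth-first exploration of vacant clusters, coupling with a near-critical Galton--Watson process, then sprinkling---is genuinely different from the paper's, which never explores vacant clusters at all: it uses the Cooper--Frieze identity (Proposition~\ref{p:CF}) that under the annealed measure, conditionally on what the walk has revealed, the vacant graph is \emph{exactly} a uniform multigraph with the observed degree sequence, and then imports the critical-window theorem for such graphs (Theorem~\ref{t:rgds}(i), from \cite{HM10}), so that the whole problem reduces to estimating $n_i(\mathcal D^{u_n})$, hence $Q(\mathcal D^{u_n})$, to precision $O(n^{-1/2}\log^5 n)$ (Theorems~\ref{t:Vuvol} and \ref{t:niconc}). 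The difficulty you flag at the end---that $\mathcal V^{u_n}$ is a functional of one correlated trajectory---is precisely what this exact conditional uniformity dissolves, and your sketch offers no substitute for it. The assertion that the \cite{CTW11}-type tree approximation ``yields a coupling of the exploration with i.i.d.\ copies of $\xi_{u_n}$'' up to volume $n^{1-\delta}$ is exactly the extension the authors single out as rather difficult and deliberately avoid: conditioning on the exploration history reveals which vertices the walk has and has not visited, which changes the conditional law of the remaining trajectory globally, and inside the window you would need the conditional offspring law to stay within $o(n^{-1/3})$ of the ideal one uniformly over $\Theta(n^{2/3})$ exploration steps. Nothing in Step 1 delivers such control; naming soft local times as a possible tool does not close this, and it is the heart of the proof, not a peripheral obstacle. (Also, the claim that the explored region is tree-like up to volume $n^{1-\delta}$ is false: an explored set of $k$ vertices carries $\Theta(k^2/n)$ surplus edges, so cycles appear already at $k\approx n^{1/2}$, i.e.\ well below the scale $n^{2/3}$ you need; and your upper bound as written is a union bound of $n$ terms each of size $O(A^{-1/2}n^{-1/3})$, which does not close---the standard fix is Markov's inequality for the number of vertices in clusters of size $\ge An^{2/3}$.)

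The lower bound is where the proposal would actually fail as stated. First, Paley--Zygmund gives only a bound of the form $\bP_{n,d}[|\mathcal C^{u_n}_\Max|\ge A^{-1}n^{2/3}]\ge c>0$, whereas Theorem~\ref{t:criticalwindow} requires this probability to exceed $1-\varepsilon$ for every $\varepsilon$ once $A=A(\varepsilon)$ is large; even for Erd\H{o}s--R\'enyi in the window, upgrading ``positive probability'' to ``$1-\varepsilon$'' needs a different argument than a bare second moment. Second, the merging mechanism is quantitatively wrong at this scale. The vertices gained in passing from $u^+=u_n+\theta n^{-1/3}$ to $u_n$ are those first visited during $(u_nn,u^+n]$; they number $O(\theta n^{2/3})$ and form many disconnected islands (not the connected range of a walk segment). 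The expected number of such vertices adjacent to two prescribed clusters of size $\asymp n^{1/3}$ is of order $\theta n^{2/3}\cdot(n^{1/3}/n)^2=\theta n^{-2/3}$, so among your $\Theta(n^{1/3})$ chosen medium clusters the expected total number of single-vertex bridges is $O(\theta)$---nowhere near enough to merge a positive fraction of them. And any argument generic enough to merge a positive fraction of medium clusters would apply to \emph{all} clusters of size $\ge cn^{1/3}$ (there are order $n^{1/2}$ of them), producing a component of size $\gtrsim n^{5/6}$ and contradicting the $O(n^{2/3})$ upper bound which is known to hold throughout the window; so the bridging effect of the sprinkled vertices is by definition borderline in the window and cannot be handled by the counting argument you describe. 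In short, both the coupling precision and the lower-bound mechanism are genuine gaps, and they are exactly the places where the paper's reduction via Proposition~\ref{p:CF} and the degree-sequence estimates do the real work.
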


If $u_n$ is not in the critical window, then the maximal connected
component behaves differently:
\begin{theorem}
  \label{t:outofwindow}
  (a) When $(u_n)_{n\ge 1}$ satisfies
  \begin{equation}
    u_\star-u_n \xrightarrow{n\to\infty}0,
    \qquad\text{and}\qquad
    \omega_n:=n^{1/3}(u_\star-u_n)\xrightarrow{n\to\infty}\infty,
  \end{equation}
  then for
  $v_n= 2 n^{2/3} \omega_n \frac{d-2}{(d-1)^2} e^{-u_\star (d-2)/(d-1)}$
  and for every $\varepsilon >0$
  \begin{equation}
    \Big|\frac{|\mathcal C^{u_n}_\Max|}{v_n}-1\Big| \le \varepsilon
    \qquad \bP_{n,d}\text{-a.a.s.}
  \end{equation}

  (b) When $(u_n)_{n\ge 1}$ satisfies
  \begin{equation}
    u_\star-u_n \xrightarrow{n\to\infty}0,
    \qquad\text{and}\qquad
    \omega_n :=n^{1/3}(u_\star-u_n)\xrightarrow{n\to\infty}-\infty,
  \end{equation}
  then for every $\varepsilon >0$ there exists $B=B(\varepsilon)>0$, such that
  for all $n$ large enough
  \begin{equation}
    \bP_{n,d}
    \big[|\mathcal C^{u_n}_\Max|\le B n^{2/3}|\omega_n|^{-1/2}\big]
    \ge 1-\varepsilon.
  \end{equation}
\end{theorem}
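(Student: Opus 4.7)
The plan for both parts of the theorem exploits the main ingredient developed in \cite{CTW11}: the vacant cluster $\mathcal C^u(v)$ of a typical vertex $v$ in $G_n$ is well approximated in distribution, up to cluster sizes of polynomial order, by the vacant cluster of the root of a random interlacements process on the infinite $d$-regular tree $\mathbb T_d$ at level $u$. This approximation identifies a Galton--Watson-type exploration with offspring mean $\mu(u)$ that is smooth in $u$, satisfies $\mu(u_\star)=1$ and $\mu'(u_\star)<0$, and has finite explicit variance $\sigma^2(u_\star)$. Writing $\mu(u_n)-1=-\mu'(u_\star)(u_n-u_\star)+O((u_n-u_\star)^2)$, the offset is of order $|\omega_n|n^{-1/3}$, comparable to the edge of the critical window.

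\textbf{Supercritical side (a).} I would fix a slowly growing threshold $k_n=\lfloor(\log n)^2\rfloor$ and study
\begin{equation*}
N_n:=\sum_{v\in V(G_n)}\bbone\bigl\{v\in\mathcal V^{u_n},\;|\mathcal C^{u_n}(v)|\ge k_n\bigr\}.
\end{equation*}
The tree coupling gives, for a uniformly chosen vertex $v$, that $\bP_{n,d}[v\in\mathcal V^{u_n},\,|\mathcal C^{u_n}(v)|\ge k_n]=(1+o(1))\theta(u_n)$, where $\theta(u_n)$ is the survival probability of the root cluster in random interlacements on $\mathbb T_d$ at level $u_n$. Standard near-critical Galton--Watson asymptotics, combined with the explicit vertex-vacancy probability $e^{-u_\star(d-2)/(d-1)}$ coming from $g_{\mathbb T_d}(v,v)=(d-1)/(d-2)$, identify the leading behaviour $\theta(u_n)=(1+o(1))v_n/n$. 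A pair-wise computation for $v,w$ at graph distance at least $3$ gives $\bE_{n,d}[N_n^2]\le(1+o(1))(\bE_{n,d}[N_n])^2$, so Chebyshev yields $N_n=(1+o(1))v_n$ with probability tending to one. A sprinkling argument, based on revealing a vanishing fraction of the half-edge pairing of $G_n$ only after exploration, then merges these $N_n$ vertices into a single connected component, giving $|\mathcal C^{u_n}_\Max|=(1+o(1))v_n$.

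\textbf{Subcritical side (b).} Here I would exploit the deterministic inequality
\begin{equation*}
|\mathcal C^{u_n}_\Max|^2\;\le\;\sum_{C}|C|^2\;=\;\sum_{v\in V(G_n)}|\mathcal C^{u_n}(v)|\,\bbone\{v\in\mathcal V^{u_n}\},
\end{equation*}
where the middle sum runs over the connected components $C$ of $\mathcal V^{u_n}$. By the tree coupling, together with the fact that a subcritical Galton--Watson total progeny has mean $(1-\mu)^{-1}$, one obtains
\begin{equation*}
\bE_{n,d}\bigl[|\mathcal C^{u_n}(v)|\,\bbone\{v\in\mathcal V^{u_n}\}\bigr]=O\bigl((1-\mu(u_n))^{-1}\bigr)=O\bigl(n^{1/3}|\omega_n|^{-1}\bigr).
\end{equation*}
Summing over the $n$ vertices gives $\bE_{n,d}[|\mathcal C^{u_n}_\Max|^2]=O(n^{4/3}|\omega_n|^{-1})$, so Markov applied to $|\mathcal C^{u_n}_\Max|^2$ yields $\bP_{n,d}[|\mathcal C^{u_n}_\Max|>Bn^{2/3}|\omega_n|^{-1/2}]=O(B^{-2})$, which is smaller than $\varepsilon$ once $B$ is large.

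\textbf{Main difficulty.} The principal technical obstacle is to propagate the tree coupling beyond its natural breakdown scale: the tree-like radius of $G_n$ is only $O(\log n)$, whereas the relevant cluster sizes are of polynomial order. For part~(b) this is circumvented by noting that only the \emph{expected} size of a typical cluster enters the estimate, and this expectation is well approximated already by a truncation of the exploration at a moderate depth. For part~(a) the sprinkling step is the most delicate point: it must provide enough fresh randomness to merge the $N_n$ large-cluster vertices into a single component, while preserving the branching-process asymptotics used to compute $\theta(u_n)$. Carrying out this refined coupling, sharpening the techniques of \cite{CTW11}, is where the main technical work lies.
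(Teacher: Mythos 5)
Your route is genuinely different from the paper's: the paper does not work with a tree coupling and sprinkling at all, but reduces everything, via Proposition~\ref{p:CF}, to a uniform random graph with the (random) degree sequence $\mathcal D^{u_n}$, proves the sharp estimates of Theorems~\ref{t:Vuvol} and~\ref{t:niconc} to get $|Q(\mathcal D^{u_n})-(u_\star-u_n)\tfrac{(d-2)^2}{d(d-1)}|=O((u_\star-u_n)^2+n^{-1/2}\log^5 n)$ as in \eqref{e:Qest}, and then simply quotes Theorem~\ref{t:rgds}(ii) and (iii) (Hatami--Molloy, Janson--\L uczak), identifying the constants $\lambda,\beta$ to get $v_n$. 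As it stands, your proposal has genuine gaps. First, the precision issue is not a side remark but the crux: both your one-point estimate $\theta(u_n)=(1+o(1))v_n/n$ and the two-point estimate $\bE[N_n^2]\le(1+o(1))(\bE N_n)^2$ require errors that are $o(\omega_n n^{-1/3})$, hence $o(n^{-1/3})$ since $\omega_n$ may grow arbitrarily slowly; the couplings of \cite{CTW11} are not available at that precision, and the paper explicitly notes that the weaker estimates of \cite{CTW11,CF10} do not suffice near the window --- obtaining $n^{-1/2+o(1)}$ relative errors is exactly the content of Theorem~\ref{t:Vuvol}. Second, in part (b) the claim $\bE[|\mathcal C^{u_n}(v)|\bbone\{v\in\mathcal V^{u_n}\}]=O((1-\mu(u_n))^{-1})$ cannot be justified by ``truncation at a moderate depth'': the tree-like radius of $G_n$ is only $O(\log n)$, while a near-critical cluster with $1-\mu\asymp\omega_n n^{-1/3}$ reaches depth $c\log n$ with probability of order $1/\log n$ (Kolmogorov-type asymptotics), so a non-negligible part of the expectation comes precisely from explorations outside the region where the tree approximation says anything; one uncontrolled excursion of polynomial size there would swamp the claimed $O(n^{1/3}|\omega_n|^{-1})$ bound. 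The paper avoids this because, conditionally on $\mathcal D^{u_n}$, the vacant graph is \emph{exactly} a uniform graph with that degree sequence, so no local-to-global truncation is ever needed.

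Third, in part (a) the merging step is not a routine appendix. ``Revealing a vanishing fraction of the half-edge pairing only after exploration'' is essentially a rediscovery of Proposition~\ref{p:CF}, but by itself it does not merge anything: showing that, in the barely supercritical configuration model with $Q\asymp\omega_n n^{-1/3}$, the $N_n\asymp n^{2/3}\omega_n$ vertices in large clusters coalesce into a single component of size $(1+o(1))v_n$ (and that no second component is comparable) is precisely the hard content of Theorem~\ref{t:rgds}(iii), which the paper invokes rather than reproves. If instead you sprinkle in $u$, the increment must be $o(\omega_n n^{-1/3})$ to leave $v_n$ unchanged, and it is far from clear that so little sprinkling connects clusters of size only $n^{2/3}\omega_n$; this is exactly the regime where naive sprinkling arguments break down. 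So either you import Theorem~\ref{t:rgds}(iii) anyway --- in which case the efficient path is the paper's, through the degree-sequence estimates --- or you owe a substantial new argument at the points above.
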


The above theorems confirm that the maximal connected component of the
vacant set behaves similarly as the largest connected cluster of the
Bernoulli percolation on random regular graphs, see
\cite{ABS04,NP10,Pit08}. Remark that the upper bound in part (b) of
Theorem~\ref{t:outofwindow}  seems to be non-optimal. The result is
however sufficient to confirm that the width of the window is $n^{-1/3}$.
To improve such a statement, it is necessary to obtain better estimate in
Theorem~\ref{t:rgds}(ii) that we quote below.

The methods of this paper are largely inspired by the recent article by
Cooper and Frieze \cite{CF10}, where the authors develop a new technique
to prove \eqref{e:supercritical} and \eqref{e:subcritical}. This
technique is very specific to deal with random regular graphs, in
contrast with the results of \cite{CTW11} which hold for a more general
class of graphs, including e.g.~large girth expanders. In the present
article we extend the methods in \cite{CF10} to the critical case. We
believe that obtaining such an extension from the techniques in
\cite{CTW11} should be rather difficult.

The crucial observation of \cite{CF10}, allowing for a very elegant proof
of \eqref{e:supercritical}, \eqref{e:subcritical} for random regular
graphs is the following. Under the annealed measure \eqref{e:annealed},
given the information about the graph discovered by the random walk up to
time $u|V|$, the subgraph of $G$ induced by the vacant set $\mathcal V^u$
is distributed as a random graph uniformly chosen within the set of
graphs with a given (random) degree sequence, see Proposition~\ref{p:CF}
below.

The paper \cite{CF10} further uses the fact that the behaviour of the
uniform random graphs $G_{\boldsymbol d}$ with a given degree sequence
$\boldsymbol d:V\to \mathbb N$ is sufficiently well known. More
precisely, as follows from \cite{MR95}, there exists a single parameter
$Q=Q(\boldsymbol d )$ (see \eqref{e:Q} below) such that $Q<0$ implies
that $G_{\boldsymbol d}$ is typically sub-critical (i.e.~has only small
  components),  and $Q>0$ implies that $G_{\boldsymbol d}$ is
supercritical (i.e.~has a giant component). Moreover, the recent paper
\cite{HM10} establishes the existence of an intermediate regime (the
  so-called critical window) when $Q(\boldsymbol d)$ converges to zero at
a certain rate as $n$ tends to infinity, see also \cite{JL09}. The
results mentioned in this paragraph that will be useful in this paper are
summarised in Theorem~\ref{t:rgds} below.

The principal contribution of this paper is thus to obtain sufficiently
sharp estimates on the random degree sequence of the vacant set
$\mathcal V^u$, and consequently on the value of the parameter $Q$, see
Theorems~\ref{t:Vuvol}, \ref{t:niconc} and \eqref{e:Qest} below. Weaker
estimates of this type were shown in \cite{CF10}, which combined with
\cite{MR95}, allowed them to deduce \eqref{e:supercritical},
\eqref{e:subcritical}.

We should remark that \cite{CF10} contains also a statement on the
critical behaviour. More precisely, Theorem~2(iii) of \cite{CF10} states
that for some $u_n = u_\star (1+o(1))$ (which might be random), the size
of $\mathcal C_{\Max}^{u_n}$ is $n^{2/3+o(1)}$, $\mathbb P_{n,d}$-a.a.s.
Our results considerably improve this statement.

Note also that much more is known about the random graphs with a given
degree sequence, see for instance the results in \cite{FR09} and
\cite{JL09}. Often the hypothesis of these results can be shown to hold
true for $\mathcal{V}^u$, using Theorems~\ref{t:Vuvol} and
\ref{t:niconc}. If this is the case, their conclusions will also apply to
$\mathcal{V}^u$ ($\bP_{n,d}$-a.a.s), providing us with more information
on the geometry of the vacant set.

As an example of such application, we obtain the following improvement on
the statement \eqref{e:supercritical} about the super-critical behaviour
of the vacant set.
\begin{theorem}
  \label{t:supercritical}
  Let $u < u_\star$. Then there is $\rho =\rho (u,d)\in (0,1)$  such that
  for every $\varepsilon >0$
  \begin{equation}
      n^{-1}|\mathcal C^u_\Max|\in
      (\rho -\varepsilon ,\rho +\varepsilon) \qquad
      \bP_{n,d}\text{-a.a.s.}
  \end{equation}
\end{theorem}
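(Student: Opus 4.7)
The strategy is to enhance the proof of the lower bound \eqref{e:supercritical} by showing that the degree sequence of the vacant subgraph concentrates around a deterministic limit, and then invoking the sharp giant-component theorem for random graphs with a prescribed degree sequence (\cite{MR95}, and in the form of \cite{FR09,JL09}).

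Fix $u<u_\star$ and $\varepsilon>0$. By Proposition~\ref{p:CF}, the graph induced by $\mathcal V^u$ is, conditionally on its degree sequence $(n_i^u)_{0\le i\le d}$, uniformly distributed among all graphs on $\mathcal V^u$ with that sequence. Theorems~\ref{t:Vuvol} and~\ref{t:niconc} should yield deterministic constants $\pi_i=\pi_i(u,d)\in[0,1]$, $0\le i\le d$, summing to $\pi=\pi(u,d)\in(0,1]$, such that for every $\delta>0$
\[
  \bP_{n,d}\Bigl[\max_{0\le i\le d} \bigl| n^{-1}n_i^u - \pi_i\bigr|>\delta\Bigr]\xrightarrow{n\to\infty} 0.
\]
Since $u<u_\star$, the Molloy--Reed parameter $Q$ computed from $(\pi_i)$ is strictly positive; this is precisely the mechanism underlying \eqref{e:supercritical} in \cite{CF10}.

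Conditionally on such a typical degree sequence, one applies the giant-component result of Theorem~\ref{t:rgds} below (in its supercritical regime). All hypotheses are trivially fulfilled since the degrees are uniformly bounded by $d$. The conclusion is that there is a deterministic number $\rho=\rho(u,d)\in(0,\pi)$, expressible as $\pi$ times the survival probability of the size-biased Galton--Watson tree with offspring law $(i+1)\pi_{i+1}/\sum_j j\pi_j$, such that
\[
  n^{-1}|\mathcal C^u_\Max| \in (\rho-\varepsilon,\rho+\varepsilon)
  \qquad\text{$\bP_{n,d}$-a.a.s.}
\]
A tower-property argument with respect to the $\sigma$-algebra generated by the random-walk trajectory (which determines $(n_i^u)$) then completes the proof.

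The main technical point is verifying that the concentration from Theorems~\ref{t:Vuvol}--\ref{t:niconc} is sharp enough to feed into a \emph{quenched} giant-component statement: one needs uniform control of $n^{-1}n_i^u - \pi_i$ for every $i$, rather than merely the sign of $Q$ being correct, as was used in \cite{CF10}. Because the degree-sequence estimates established in the present paper deliver such control with polynomial rates, this verification is routine, and no essentially new input beyond Theorems~\ref{t:Vuvol} and~\ref{t:niconc} is required; the theorem is, as the authors note, essentially a corollary.
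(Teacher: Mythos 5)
Your proposal follows essentially the same route as the paper: Proposition~\ref{p:CF} reduces the problem to the configuration model with the random degree sequence $\mathcal D^u$, Theorems~\ref{t:Vuvol} and \ref{t:niconc} give $n_i(\mathcal D^u)/n\to d_i^u$ and hence $Q(\mathcal D^u)\to p_u(d-1)-1>0$ for $u<u_\star$, and Theorem~\ref{t:rgds}(iv) (plus the transfer via \eqref{e:simple}) yields the deterministic limit $\rho$. The only inaccuracy is your closed-form description of $\rho$: it is not ``$\pi$ times the survival probability of the size-biased tree'' but rather $\sum_i \pi_i(1-\xi^i)$ with $\xi$ the extinction probability of the size-biased branching process (equivalently the paper's $\rho=1-g(\xi)$ with $\xi$ solving $(xp_u+(1-p_u))^{d-1}=x$), which is immaterial since the theorem only asserts existence of $\rho\in(0,1)$.
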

In the above statement, the value of $\rho$ can be explicitly calculated,
see \eqref{e:rhodef} below. Remark also that to obtain the above
theorem,  our precise estimates on $Q$ are not necessary, in fact the
precision obtained in \cite{CF10} would have been sufficient.

Finally, let us briefly describe Theorems~\ref{t:Vuvol} and
\ref{t:niconc}. The former, establishes an estimate on the expected
degree distribution of $\mathcal{V}^u$, by approximating the probability
that a random walk visits a neighbourhood of a given vertex $x \in V$
before time $u n$. For this, we make use of the well known relation
between random walks on graphs and discrete potential theory, as well as
the pairing construction introduced by Bollobás, which we detail in
Section~\ref{s:notation}. Then in Theorem~\ref{t:niconc} we prove that
with high probability the degree sequence of $\mathcal{V}^u$ concentrates
around its expectation. This is done using a standard concentration
inequality, together with the fast mixing properties of the random walk
on a random regular graph.

\smallskip

This paper is organised as follows. In Section~\ref{s:notation} we
introduce some of the notation needed in the paper and the pairing
construction of random regular graphs. In Section~\ref{s:prelim}, we
recall the results of \cite{CF10,HM10,JL09} needed later.
Section~\ref{s:rw} contains precise estimates on the behaviour of the
simple random walk on random regular graphs. In
Section~\ref{s:degreesequence}, we give the estimates on the degree
sequence of the vacant set.
Theorems~\ref{t:criticalwindow}--\ref{t:supercritical} are proved in
Section~\ref{s:proofs}. The Appendix summarises some general facts
concerning random walks on finite graphs.

\section{Notation and definitions}
\label{s:notation}

\subsection{Basic notation}
We now introduce some basic notation. Throughout the text $c$ or $c'$
denote strictly positive constants only depending on $d$, with value
changing from place to place. Dependence of constants on additional
parameters appears in the notation. For instance $c_u$ denotes a positive
constant depending on $u$ and possibly on $d$. We write
$\mathbb N = \{0,1,\dots\}$ for the set of natural numbers, and $[d]$ for
the set $\{1,\dots,d\}$. For a set $A$ we denote by $|A|$ its
cardinality. For any sequence of probability measures $P_n$ and events
$A_n$ we say that $A_n$ holds $P_n$-a.a.s. (asymptotically almost
  surely), when $\lim_{n\to\infty}P_n[A_n]=1$.

In this paper, the term \textit{graph} stands for a finite simple graph,
that is a graph without loops or multiple edges. Sometimes we
intentionally allow the graph to have loops and/or multiple edges and in
this case we use the term \textit{multigraph}. For arbitrary (multi)graph
$G=(V,\mathcal E)$, we use $\dist(\cdot,\cdot)$ to denote the usual graph
distance and write $B(x,r)$ for the closed ball centred at $x$ with
radius $r$, that is $B(x,r)=\{y\in V:\dist(x,y)\le r\}$.

We use $\mathcal G_{n,d}$ (resp. $\mathcal M_{n,d}$) to denote the set of
all $d$-regular graphs (resp.~multigraphs) with vertex set
$V_n=\{1,\dots,n\}$.  Given a degree sequence
$\boldsymbol d:V_n\to \mathbb N$, we use $\mathcal G_{\boldsymbol d}$ to
denote the set of graphs for which every vertex $x\in V_n$ has the degree
$\boldsymbol d_x=\boldsymbol d(x)$. Similarly,
$\mathcal M_{\boldsymbol d}$ stands for the set of such multigraphs; here
loops are counted twice when considering the degree. $\mathbb P_{n,d}$
and $\mathbb P_{\boldsymbol d}$ denote the uniform distributions on
$\mathcal G_{n,d}$ and $\mathcal G_{\boldsymbol d}$ respectively.

\subsection{Pairing construction}
\label{ss:pairing}
In order to study properties of random regular graphs, Bollobás (see
  e.g.~\cite{Bol01}) introduced the so-called pairing construction, which
allows to generate such  graphs starting from a random pairing of a set
with $dn$ elements. The same construction can be used to generate a
random graph chosen uniformly at random from $\mathcal G_{\boldsymbol d}$.
Since this pairing construction will be important in what follows, we
give here a short overview of it.

From now on, whenever we consider a sequence
$\boldsymbol d:V_n\to \mathbb N$, we suppose that
$\sum_{x \in V_n} \boldsymbol d_x$ is even. Given such a sequence, we
associate to every vertex $x \in V_n$, $\boldsymbol d_x$ half-edges. The
set of half-edges is denoted by
$H_{\boldsymbol d} = \{ (x,i): x \in V_n, i \in [\boldsymbol d_x]\}$. We
write $H_{n,d}$ for the case $\boldsymbol d_x=d$ for all $x\in V_n$.
Every perfect matching $M$ of $H_{\boldsymbol d}$ (i.e. partitioning of
  $H_{\boldsymbol d}$ into $|H_{\boldsymbol d}|/2$ disjoint pairs)
corresponds to a multigraph
$G_M = (V_n,\mathcal E_M)\in \mathcal M_{\boldsymbol d}$ with
\begin{equation}
  \label{e:multiedgeset}
  \mathcal E_M = \big\{ \{x,y\}: \big\{(x,i),(y,j)\big\} \in M
    \text{ for some $i \in [\boldsymbol d_x]$,
    $j \in [\boldsymbol d_y]$} \big\}.
\end{equation}
We say that the matching $M$ is simple, if the corresponding multigraph
$G_M$ is simple, that is $G_M$ is a graph. With a slight abuse of
notation, we write $\barP_{\boldsymbol d}$ for the uniform distribution
on the set of all perfect matchings of $H_{\boldsymbol d}$, and also for
the induced distribution on the set of multigraphs
$\mathcal M_{\boldsymbol d}$. It is well known (see e.g.~\cite{Bol01} or
  \cite{McD98}) that a $\barP_{\boldsymbol d}$ distributed multigraph $G$
conditioned on being simple has distribution
$\mathbb P_{\boldsymbol d}$, that is
\begin{equation}
  \label{e:GM}
  \barP_{\boldsymbol d}[G\in \cdot\,|G\in \mathcal
    G_{\boldsymbol d}] = \mathbb P_{\boldsymbol d}[G\in \cdot\,],
\end{equation}
and that, for $d$ constant, there is $c>0$ such that for all $n$ large
enough
\begin{equation}
  \label{e:simple}
  c<
  \barP_{n, d}[G\in \mathcal G_{n, d}]
  <1-c.
\end{equation}
These two claims allow to deduce $\mathbb P_{n,d}$-a.a.s.~statements
directly from $\barP_{n,d}$-a.a.s.~statements.

The  main advantage of dealing with matchings is that they can be
constructed sequentially: To construct a uniformly distributed perfect
matching of $H_{\boldsymbol d}$ one samples \emph{without replacements} a
sequence $h_1,\dots,h_{|H_{\boldsymbol d}|}$ of elements of
$H_{\boldsymbol d}$ in the following way. For $i$ odd, $h_i$ can be
chosen by an arbitrary rule (which might also depend on the previous
  $(h_j)_{j < i}$), while if $i$ is even, $h_i$ must be chosen uniformly
among the remaining half-edges. Then, for every
$1\le i\le |H_{\boldsymbol d}|/2$ one matches $h_{2i}$ with $h_{2i-1}$.

It is clear from the above construction that, conditionally on
$M' \subseteq M$ for a (partial) matching $M'$ of $H_{\boldsymbol d}$,
$M\setminus M'$ is distributed as a uniform perfect matching of
$H_{\boldsymbol d}\setminus\{(x,i):(x,i) \text{ is matched in }M'\}$.
Since the law of the graph $G_M$ does not depend on the labels `$i$' of
the half-edges, we obtain for all partial matchings $M'$ of
$H_{\boldsymbol d}$
\begin{equation}
  \label{e:recursivematchings}
  \barP_{\boldsymbol d}[G_{M\setminus M'}\in \cdot\,|M\supset M'] =
  \barP_{\boldsymbol d'}[G_M\in \cdot],
\end{equation}
where $\boldsymbol d'_x$ is the number of half-edges incident to $x$ in
$H_{\boldsymbol d }$ that are not yet matched in $M'$, that is
$\boldsymbol d'_x=\boldsymbol d_x-
\big|\{\{(y_1,i),(y_2,j)\}\in M':y_1=x,i\in[ \boldsymbol d_x]\} \big|$,
and $G_{M\setminus M'}$ is the graph corresponding to a non-perfect
  matching $M\setminus M'$, defined in the obvious way.

\subsection{Random walk notation}
\label{ss:rwnot}
For an arbitrary multigraph $G=(V,\mathcal E)$, we use $P^G_x$ to denote
the law of canonical continuous-time simple random walk on $G$ started at
$x\in V$, that is of the Markov process with generator given by
\begin{equation}
  \label{e:lap}
  \mathcal L f (x) = \sum_{y \in V} p_{xy}(f(y)-f(x)) ,
  \qquad \text{for } f:V\to \mathbb R, x \in V.
\end{equation}
Here $p_{xy}=n_{xy}/d_x$, $n_{xy}$ is the number of edges connecting $x$
and $y$ in $G$, and $d_x$ is the degree of $x$; the loops are counted
twice in $n_{xx}$ and $d_x$.

We write $P^{G,\ell}_x$ for the restriction of $P^G_x$ to $D([0,\ell],V)$
and $P^{G,\ell}_{xy}$ for the law of random walk bridge, that is for
$P^{G,\ell}_x$ conditioned on $X_\ell=y$. We write
$E^G_x, E^{G,\ell}_x, E^{G,\ell}_{xy}$ for the corresponding
expectations. The canonical shifts on $D([0,\infty),V)$ are denoted by
$\theta_t$. The time of the $n$-th jump is denoted by $\tau_n$, i.e.
$\tau_0=0$ and for $n \geq 1$,
$\tau_n = \inf\{ t \geq 0: X_t \neq X_0\} \circ \theta_{\tau_{n-1}} + \tau_{n-1}$.
The process counting the number of jumps before time $t$ is denoted by
$N_t=\sup\{k:\tau_k\le t\}$. Note that, when $G$ is simple, under $P^G_x$,
$(N_t)_{t \geq 0}$ is a Poisson process on ${\mathbb R}_+$ with intensity
one. We write $\hat X_n$, $n \in \mathbb{Z}_+$, for the discrete skeleton
of the process $X_t$, that is $\hat X_n = X_{\tau_n}$.

Given $A\subset V$, we denote by $H_A$ and $\tilde H_A$ the respective
entrance and hitting time of~$A$
\begin{equation}
  \label{e:hittingtimes}
  \begin{split}
    H_A &= \inf\{t \geq 0 : X_t \in A\},\qquad
    \text{and}\qquad
    {\tilde H}_A = H_A\circ \theta_{\tau_1} + \tau_1.
  \end{split}
\end{equation}

We denote by $\pi$ the stationary distribution for the simple random walk
on $G$, which is uniform if $G$ is $d$-regular (even if $G$ is not
  simple). $P^G$ stands for the law of the simple random walk started at
$\pi $ and $E^G$ for the corresponding expectation. For all real valued
functions $f,g$ on $V$ we define the Dirichlet form
\begin{equation}
  \label{def:dir}
  {\mathcal D}(f,g) =
  \frac{1}{2} \sum_{x,y \in V}
    (f(x)-f(y))(g(x)-g(y)) \pi_x p_{xy}
    = - \sum_{x \in G} \mathcal L f(x) g(x) \pi_x.
\end{equation}
The spectral gap of $G$ is given by
\begin{align}
  \label{def:gap}
  \lambda_G = \min \bigl\{ {\mathcal D}(f,f): \pi(f^2)=1, \pi(f)=0 \bigr\}.
\end{align}
From \cite{Sal97}, p.~328, it follows that for $d$-regular graphs,
\begin{align}
  \label{e:I}
  \sup_{x,y \in V} |P_x[X_t=y] - \pi_y| \leq e^{-\lambda_G t},
  \text{ for all } t \geq 0.
\end{align}
It is also a well known fact (see e.g.~\cite{Fri08}) that there exist
$\alpha >0$ such that
\begin{equation}
  \label{e:gap}
  \lambda_G>\alpha ,\qquad\text{both $\mathbb P_{n,d}$-a.a.s.~and
    $\barP_{n,d}$-a.a.s.}
\end{equation}

\section{Preliminaries}
\label{s:prelim}
\subsection{Distribution of the vacant set}
\label{ss:distvacset}

Recall the notation $\mathcal V^u=\mathcal V^u_G$ for the vacant set of
the random walk on the graph $G=(V,\mathcal E)$ at level $u$,
\eqref{e:Vu}. For the purpose of this paper, it is suitable to define a
closely related object, the \textit{vacant graph}
$\bbV^u = (V,\mathcal{E}^u)$ where
\begin{equation}
  \label{e:vacantgraph}
\mathcal{E}^u = \{\{x,y\}\in \mathcal E: x,y\in \mathcal V^u_G\}.
\end{equation}
It is important to notice that the vertex set of $\bbV^u$ is $V$ and not
$\mathcal V^u$, in particular $\bbV^u$ is not the graph induced by
$\mathcal{V}^u$ in $G$. Observe however that the maximal connected
component of the vacant set $\mathcal C_\Max$ (defined before in terms of
  the graph induced by $\mathcal{V}^u$ in $G$) coincides with the maximal
connected component of the vacant graph $\bbV^u$ (except when
  $\mathcal V^u$ is empty, but this difference can be ignored in our
  investigations).

We use $\mathcal D^u:V\to \mathbb N$ to denote the (random) degree
sequence of $\bbV^u$, and write $Q_{n,d}^u$ for the distribution of this
sequence under the annealed measure $\bbP_{n,d}$, defined by
$\bbP_{n,d} (\cdot) := \int P^G(\cdot) \barP_{n,d}(\d G)$.

The following proposition from Cooper and Frieze \cite{CF10} allows us to
reduce questions on the properties of the vacant set $\mathcal V^u$ of
the random walk on random regular graphs to questions on random graphs
with given degree sequences.

\begin{proposition}[Lemma 6 of \cite{CF10}]
  \label{p:CF}
  For every $u\ge 0$, the distribution of the vacant graph $\bbV^u$ under
  $\bbP_{n,d}$ is given by $\barP_{\boldsymbol d}$ where $\boldsymbol d $
  is sampled according to $Q^u_{n,d}$, that is
  \begin{equation}
    \label{e:CF}
    \bbP_{n,d}[\bbV^u \in \cdot\,] = \int \bar
    {\mathbb P}_{\boldsymbol d}[G \in \cdot\,] Q_{n,d}^u(\d \boldsymbol d).
  \end{equation}
\end{proposition}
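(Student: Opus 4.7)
The plan is to exploit the pairing construction of Section~\ref{ss:pairing} by generating the graph $G$ and the random walk $X$ simultaneously under $\bbP_{n,d}$, revealing pairings only when the walk forces us to. I sample the walk's starting point uniformly and run its Poisson clock in the usual way; at each jump from a vertex $x$, choose uniformly one of the $d$ half-edges $(x,i)$ at $x$: if $(x,i)$ is already paired in the partial matching revealed so far, the walk follows its match; otherwise, pair $(x,i)$ with a uniformly chosen still-unmatched half-edge $(y,j)$ and move the walk to $y$. By the sequential construction described in Section~\ref{ss:pairing}, this correctly produces the joint law $\bbP_{n,d}$ of $(G, X)$.

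Let $M'$ be the partial matching revealed by time $u|V|$, and note that $\mathcal V^u$ is a function of the walk trace alone. The key is now a two-stage revelation of $M \setminus M'$. By \eqref{e:recursivematchings}, conditional on $M'$ and $(X_t)_{t \le u|V|}$, the restriction $M \setminus M'$ is a uniform matching of the still-unpaired half-edges. In Stage~A, process one-by-one the unmatched half-edges incident to \emph{visited} vertices and reveal their partners; this exhausts every pair of $M \setminus M'$ touching a visited vertex. In Stage~B, reveal whatever remains.

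After Stage~A every still-unpaired half-edge sits at a vacant vertex, and the number of such half-edges at a given $v \in \mathcal V^u$ is exactly $\mathcal D^u_v$: the $d - \mathcal D^u_v$ eliminated ones are precisely those half-edges of $v$ whose $M$-partner lies at a visited vertex, i.e.~the ones not contributing to an edge of $\bbV^u$. A second application of \eqref{e:recursivematchings} now says that, conditionally on everything revealed so far, Stage~B is a uniform matching of the remaining half-edges, which carry the degree profile $\mathcal D^u$; hence Stage~B is exactly a draw from $\barP_{\mathcal D^u}$, and the multigraph it outputs (with visited vertices isolated) coincides with $\bbV^u$. Integrating out the information used in Stages~0 and~A yields \eqref{e:CF}.

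I expect no substantial obstacle — the content of the proposition is essentially the observation that \eqref{e:recursivematchings} can be applied twice. The only care needed is in the ordering of revelations: Stage~A must exhaust every unmatched half-edge incident to a visited vertex, so that $\mathcal D^u$ is measurable before Stage~B begins and Stage~B indeed operates only on half-edges at vacant vertices.
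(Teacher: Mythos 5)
Your proposal is correct and follows essentially the same route as the paper: the paper's $M_{un}$ (all pairs of $M$ incident to visited vertices) is exactly your Stage~0 together with Stage~A, and conditioning on $\mathcal F_u=\sigma((X_s,M_s),s\le un)$ and applying \eqref{e:recursivematchings} is your Stage~B step, after which one integrates over $\mathcal D^u\sim Q^u_{n,d}$ just as you do. Your two-stage revelation is merely a more operational phrasing of the same argument, so there is nothing substantive to add.
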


Although a proof of Lemma~\ref{p:CF} can be found in \cite{CF10}, we
provide a proof here for the sake of completeness.

\begin{proof}
  Let $M$ be a $\barP_{n,d}$-distributed pairing of $H_{n,d}$ and let $X$
  be a random walk on $G=G_M$. Define $M_t\subset M$ to be the set of all
  pairs of half-edges incident to a vertex $X_s$ with $s\le t$,
  \begin{equation}
    \label{e:Mt}
    M_t= \big\{\{(x,i),(y,j)\}\in M:x\in \{X_s:s\le t\},i\in[ d] \big\}.
  \end{equation}
  It is easy to see that the edges of the vacant graph $\bbV^u$
  correspond exactly to the pairs in $M\setminus M_{un}$, that is
  $\bbV^u = G_{M\setminus M_{un}}$. In particular, $\mathcal D^u(x)$ is
  the number of the half-edges incident to $x$ not matched in $M_{un}$.
  Denoting by $\mathcal F_u$ the $\sigma $-algebra generated by
  $((X_s,M_s),s\le un)$, the above implies that $\mathcal D^u$ is
  $\mathcal F_u$-measurable.

  It follows from \eqref{e:recursivematchings} that, conditionally on
  $\mathcal F_u$, the distribution of $G_{M\setminus M_{un}}$
  only depends on the sequence of half-edges that are not matched
  in $M_{un}$, and is given by $\barP_{\mathcal D_u}$.
  More precisely,
  \begin{equation}
    \bbP_{n,d}
    [\bbV^u \in \cdot\,|\mathcal F_{u}]
    = \bbP_{n,d}[G_{M\setminus M_{un}}\in
      \cdot\,|\mathcal D^u]=\barP_{\mathcal D^u}[G\in \cdot\,],
  \end{equation}
  and thus
  \begin{equation}
      \bbP_{n,d}[\bbV^u \in \cdot\,]
      =\bbP_{n,d}\big[\bbP_{n,d}
        [\bbV^u \in \cdot\,|\mathcal F_u]\big]
      =\bbP_{n,d}\big[\barP_{\mathcal D^u}
        [G \in \cdot\,]\big]
      =
      \int \bar
      {\mathbb P}_{\boldsymbol d}[G \in \cdot\,] Q_{n,d}^u(\d \boldsymbol
        d),
  \end{equation}
  where the last equality follows from the definition of $Q_{n,d}^u$. This
  concludes the proof of Proposition~\ref{p:CF}.
\end{proof}

\subsection{Behaviour of random graphs with a given degree sequence.}
We now summarise the results about the behaviour of random graphs with
a given degree sequence which will be used in this paper. For a degree
sequence $\boldsymbol d: V_n\to \mathbb N$ we define
\begin{equation}
  \label{e:Q}
  Q(\boldsymbol d)=\frac{\sum_{x=1}^n \boldsymbol d_x^2}
  {\sum_{x=1}^n \boldsymbol d_x}-2,
\end{equation}
and set $n_i(\boldsymbol d)$ to be the number of $x\le n$ with
$\boldsymbol d_x=i$,
\begin{equation}
  n_i(\boldsymbol d) = \big|\{x \in V_n: \boldsymbol d_x = i\}\big|.
\end{equation}
For any graph $G$ we use $\mathcal C_\Max(G)$ to denote the maximal
connected component of $G$.

\smallskip

The following theorem summarises the results of \cite{MR95,JL09,HM10}
needed later.
\begin{theorem}
  \label{t:rgds}
  Let $(\boldsymbol d^n)_{n\ge 1}$, $\boldsymbol d^n:V_n\to \mathbb N$,
  be a sequence of degree sequences. We assume that the degrees are
  uniformly bounded (i.e.~$\max\{\boldsymbol d^n_x:n\ge 1, x\le n\}\le \Delta<\infty$),
  and that $n_1(\boldsymbol d^n)\ge \zeta  n$ for a $\zeta>0$.

  \begin{itemize}
    \item[(i)](critical window) If
    $|Q(\boldsymbol d^n)|\le \lambda n^{-1/3}$ for all $n\ge 1$, then for
    every $\varepsilon >0$ there exists
    $A=A(\zeta ,\lambda ,\varepsilon,\Delta)$  such that
    for all $n$ large enough
    \begin{equation*}
      \barP_{\boldsymbol d^n}[A^{-1}n^{2/3}\le |\mathcal C_{\Max}(G)| \le A
        n^{2/3}]\ge 1- \varepsilon.
    \end{equation*}

    \item[(ii)](below the window) If
    $\lim_{n\to\infty}n^{1/3}Q(\boldsymbol d^n)= -\infty$ and
    $\lim_{n\to\infty}Q(\boldsymbol d^n) = 0$, then for every
    $\varepsilon >0$ exists $B=B(\zeta ,\varepsilon ,\Delta )<\infty$
    such that for all $n$ large enough
    \begin{equation*}
      \barP_{\boldsymbol d^n}
      \Big[|\mathcal C_\Max(G)| < B \sqrt {n/|Q(\boldsymbol d^n)|}\Big]
      > 1-\varepsilon .
    \end{equation*}

    \item[(iii)] (above the window) Let
    $\lim_{n\to\infty}n^{1/3}Q(\boldsymbol d^n)= +\infty$ and
    $\lim_{n\to\infty}Q(\boldsymbol d^n)= 0$.
    In addition, assume that
    \begin{equation}
      \label{e:plimit}
      \lim_{n\to\infty}
      \frac{n_i(\boldsymbol d^n)}{n} = p_i,
      \qquad \text{for all } 0\le i\le \Delta,
    \end{equation}
    for some probability distribution $(p_i)_{0\le i\le \Delta }$ on
    $\{0,\dots,\Delta\}$, and set $\lambda = \sum_{i=0}^\Delta i p_i$,
    $\beta =   \sum_{i=0}^\Delta i(i-1)(i-2) p_i$,
    and $v_n=2n \lambda^2 \beta^{-1} Q(\boldsymbol d^n)$.
    Then, for every $\varepsilon $
    \begin{equation*}
      \Big|\frac{|\mathcal C_\Max(G)|}{v_n}-1\Big|<\varepsilon, \qquad
      \barP_{\boldsymbol d^n}\text{-a.a.s.},
    \end{equation*}

    \item[(iv)] (super-critical regime) Let
    $\lim_{n\to\infty}Q(\boldsymbol d^n)= Q_\infty>0$ and assume that
    \eqref{e:plimit} holds. Let $g$ be the generating function of $(p_i)$,
    $g(x)=\sum_{i=0}^\Delta p_i x^i$. Then there exists a unique solution
    $\xi $ to $g'(x)=\lambda x$ in $(0,1)$, and for $\rho = 1-g(\xi )$
    and any $\varepsilon >0$
    \begin{equation*}
      \Big| \frac{|\mathcal C_\Max(G)|}n - \rho \Big|
      \leq \varepsilon, \qquad \barP_{\boldsymbol d^n}\text{-a.a.s.}
    \end{equation*}
  \end{itemize}
\end{theorem}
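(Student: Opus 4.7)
My plan is to analyze the components of $G$ through the standard exploration of the configuration model on $H_{\boldsymbol d^n}$. Explore the graph by repeatedly revealing half-edges: maintain a list of ``active'' half-edges; at each step take an active half-edge $h$, pair it uniformly with another unmatched half-edge (thus revealing the other endpoint $x$), remove $h$ together with this partner, and add the $\boldsymbol d^n_x - 1$ (or $\boldsymbol d^n_x$ if this is a new component) other half-edges at $x$ to the active set. Let $S_k$ denote the number of active half-edges after $k$ pairings; a connected component is completed exactly at a zero of $S_k$. The key algebraic identity is that, if we pick the endpoint $x$ size-biased (as occurs here, since we pair into $x$ with probability $\boldsymbol d^n_x / (\text{remaining half-edges})$), then the conditional expected increment of $S_k$ at time $0$ equals $Q(\boldsymbol d^n)$. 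Thus $Q$ plays the role of the drift of the exploration walk, and the four regimes in the theorem correspond directly to $Q \ll -n^{-1/3}$, $|Q| \lesssim n^{-1/3}$, $Q \gg n^{-1/3}$ with $Q \to 0$, and $Q \to Q_\infty > 0$.

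For parts (i)--(iii), which all sit near criticality, the plan is to prove a functional limit theorem for the rescaled exploration process. Writing $S_{\lfloor tn^{2/3}\rfloor} / n^{1/3}$ and using the uniform degree bound $\Delta$ and the positive density of degree-$1$ vertices ($n_1 \ge \zeta n$, ensuring the variance of increments stays bounded away from $0$), one obtains convergence to a Brownian motion with parabolic drift
\begin{equation*}
  W_t + Q_\infty \sigma^{-2} t - \tfrac12 \kappa t^2,
\end{equation*}
where $\kappa$ encodes the second moment of the size-biased degree and $Q_\infty$ is the limit of $n^{1/3} Q(\boldsymbol d^n) \in [-\infty,+\infty]$. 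The component sizes correspond to the lengths of excursions above past minima, and results of Aldous on the excursions of Brownian motion with parabolic drift yield tightness and the correct scaling. For (ii), negative drift makes excursions short, giving the $\sqrt{n/|Q|}$ bound through a classical random-walk tail estimate. For (iii), the positive drift creates a single dominant excursion, and the formula $v_n = 2 n \lambda^2 \beta^{-1} Q$ comes from computing the area/length of the dominant excursion, where $\lambda = \sum i p_i$ is the mean degree and $\beta = \sum i(i-1)(i-2) p_i$ controls the quadratic term in the drift.

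Part (iv) requires a genuinely different approach since we leave the critical regime. Here I would use the branching process approximation: in the early stages of the exploration, $S_k$ behaves like the total-progeny walk of a Galton--Watson tree whose offspring distribution is the size-biased shift of $(p_i)$, namely $q_i = (i+1) p_{i+1} / \lambda$. The survival probability of this branching process equals $1 - g(\xi)/\ldots$; more cleanly, one verifies that the unique $\xi \in (0,1)$ solving $g'(x) = \lambda x$ is the extinction probability of the ``spine-rooted'' tree, and that the survival probability at the root is $1 - g(\xi)$. A sprinkling / second moment argument (or the Molloy--Reed method) then shows that all ``surviving'' starts merge into a single giant component of size $(1+o(1))\rho n$ with $\rho = 1 - g(\xi)$, while all non-surviving starts give components of size $O(\log n)$.

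The main obstacle is part (i), the critical window, where neither branching process truncation nor drift-based concentration alone suffices: one must control the exploration process on the exact scale $(n^{2/3}, n^{1/3})$. This requires the diffusion approximation to hold uniformly over the whole exploration, which in turn needs careful handling of the depletion of high-degree vertices (the drift of $S_k$ changes as the configuration model is depleted, producing the parabolic term). The second delicate point is translating ``length of the longest excursion of the limit process'' into ``size of the largest component'' with matching upper and lower bounds; the upper bound needs a union bound over excursions and the lower bound needs anti-concentration of the exploration walk around its running minimum. Once this is in place, parts (ii) and (iii) follow from the same functional limit by specializing to $Q_\infty = \mp \infty$, and part (iv) is largely decoupled via the branching process argument.
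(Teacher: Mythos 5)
Your proposal sets out to prove this theorem from first principles via the configuration-model exploration process, but the paper does not do this at all: Theorem~\ref{t:rgds} is quoted, with parts (i)--(ii) taken from Theorems~1.1 and 1.2 of \cite{HM10} and parts (iii)--(iv) from Theorems~2.3 and 2.4 of \cite{JL09}. The paper's ``proof'' consists only of checking that the hypotheses match: the uniform degree bound $\Delta$ together with $n_1(\boldsymbol d^n)\ge \zeta n$ keeps the constant $R(\boldsymbol d^n)$ of \cite{HM10} bounded away from $0$ and $\infty$; the identification $E[D(D-2)]=\lambda Q_\infty$ translates the notation of \cite{JL09}; and, since neither reference allows vertices of degree zero, one removes them and verifies that $Q$, $v_n$ and $\rho n$ are unchanged under the passage from $(p_i)$ to $(\bar p_i)=(p_i/(1-p_0))_{i\ge 1}$.

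Measured as a self-contained proof, your sketch has genuine gaps, and you in fact name them yourself without resolving them. The assertion that the rescaled exploration walk converges to a Brownian motion with parabolic drift, uniformly over degree sequences satisfying only $\max \boldsymbol d^n_x\le\Delta$ and $n_1\ge\zeta n$, and the translation of excursion lengths into component sizes with matching upper and lower bounds on the scale $n^{2/3}$, are precisely the content of \cite{HM10} (and of the related work \cite{NP10}); they are not corollaries of Aldous' results for $G(n,p)$, because the increments here are neither independent nor identically distributed and the drift depletes as high-degree vertices are used up. Similarly, for (ii) a ``classical random-walk tail estimate'' does not directly apply to the self-interacting exploration walk, and for (iii) the constant in $v_n=2n\lambda^2\beta^{-1}Q(\boldsymbol d^n)$ requires the law-of-large-numbers analysis of the exploration carried out in \cite{JL09}, not just a heuristic excursion-area computation. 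Your branching-process description of (iv) is essentially right (the extinction probability $\xi$ solves $g'(x)=\lambda x$ and the survival probability of the root is $1-g(\xi)$), but the merging of surviving starts into a single giant component again needs the full argument of \cite{JL09} or Molloy--Reed. In short: either you cite these results, in which case the only work left is the hypothesis-checking and the degree-zero reduction described above (the latter is harmless in your framework, since degree-zero vertices are isolated components and do not enter $\lambda$, $\beta$ or the exploration), or you must actually carry out the functional limit theorem and excursion analysis, which your proposal only outlines.
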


\begin{proof}
  Parts (i), (ii) correspond to Theorems 1.1 and 1.2 of \cite{HM10},
  where these statements are proved under more general assumptions. In
  particular, \cite{HM10} does not require the uniform upper bound
  $\Delta$ on the maximal degree. The restriction to the uniformly
  bounded degree sequences implies that the constant $R(\boldsymbol d^n)$
  used in \cite{HM10} satisfies $c<R(\boldsymbol d^n)<c^{-1}$ for all $n$
  large enough and is therefore immaterial for our purposes.

  Parts (iii), (iv) are taken from Theorems 2.3 and 2.4 of \cite{JL09}.
  When reading those theorems it is useful to realise that the
  $(p_i)$-distributed random variable $D$ used in \cite{JL09} satisfies
  $\mathbb E[D]=\lambda $ and that $E[D(D-2)]=\lambda Q_\infty$ in our
  notation.

  Remark however that neither \cite{HM10}, or \cite{JL09} consider degree
  sequences with vertices of degree zero, that is with
  $n_0(\boldsymbol d^n)> 0$. It can however be seen easily, that if
  $n_0(\boldsymbol d^n)$ does not exceed $\zeta 'n$, $\zeta '<1$ (which
    is implied by the assumptions of the theorem), the vertices of degree
  zero do not have any influence on the existence of the giant cluster,
  they only change the constants $A$, $B$ in (i), (ii). For (iii), (iv),
  when $n_0(\boldsymbol d^n)/n\to p_0\neq 0$, one applies the theorem for
  the modified sequence $(\bar{\boldsymbol d}^{n})$ where all vertices of
  degree $0$ are omitted. The new degree sequences
  $\bar{\boldsymbol d}^{n}$ are functions on $V_{\bar n}$, with
  $\bar n = n(1-p_0)+o(1)$. They satisfy
  $n_i(\bar {\boldsymbol d}^{n})/\bar n\xrightarrow{n\to\infty} p_i/(1-p_0)=:\bar p_i$.
  Therefore, denoting by the letters with bars the quantities related to
  the distribution $(\bar p_i)$, we obtain
  $Q(\bar{\boldsymbol d^n})=Q(\boldsymbol d^n)$,
  $\bar \lambda = \lambda /(1-p_0)$, $\bar \beta = \beta /(1-p_0)$,
  $\bar g(x)=(g(x)-p_0)/(1-p_0)$, and $\bar \xi = \xi $. This implies
  that  $\bar v_{\bar n}=v_n$ and $\bar \rho\bar n=\rho n $, confirming
  that zero-degree vertices have no influence on the asymptotics of the
  size of the maximal connected component. This completes the proof.
\end{proof}

\section{Random walk estimates}
\label{s:rw}
This section contains  estimates on the random walk on random regular
multigraphs which will be useful later in order to estimate the typical
degree sequence of the vacant graph $\bbV^u$.

We start by introducing some notation. We use $\mathbb{T}^d$ to denote
the infinite $d$-regular tree with root $\varnothing$. For a $d$-regular
multigraph $G=(V,\mathcal E)$, a map $\phi$ from $\mathbb{T}^d\to V$ is
said to be a \textit{covering of $G$ from $x \in V$}, if
$\phi(\varnothing) = x$, and for every $y \in \mathbb{T}^d$, $\phi$ maps
the $d$ neighbours of $y$ in $\mathbb T^d$  to the neighbours of
$\phi (y)$ in $G$, including the multiplicities and the loops. For $d$-regular
multigraphs constructed by the pairing construction this means that the
neighbours of $y$ are sent by $\phi $ to the vertices which are paired
with $(\phi (y),i)$, $i\in [d]$.

In agreement with our previous notations, $P^{\mathbb{T}^d}_y$ denotes
the law of the continuous-time simple random walk on $\mathbb{T}^d$
starting from $y$. It is important to notice that fixing a covering $\phi$
from $x$, the image by $\phi$ of a random walk in $\mathbb{T}^d$ with law
$P_\varnothing^{\mathbb{T}^d}$ is distributed as $P^G_x$.

For every finite connected $\mathbb A\subset \mathbb T^d$ and
$z\in \mathbb A$ we now define the escape probabilities
\begin{equation}
  \label{e:eB}
  e_{\mathbb A}(z) = P^{\mathbb{T}^d}_z [\tilde{H}_{\mathbb A} = \infty],
\end{equation}
which can be calculated explicitly in practical examples using the fact that
\begin{equation}
  \label{e:escape}
  P^{\mathbb{T}^d}_y [H_\varnothing = \infty] = \frac{d-2}{d-1},
\end{equation}
for every neighbour $y$ of $\varnothing$. This comes from a standard
calculation for a one-dimensional simple random walk with drift, see
for instance \cite{Woe00}, proof of Lemma~(1.24).

\begin{figure}
  \begin{center}
    \includegraphics[width=9cm]{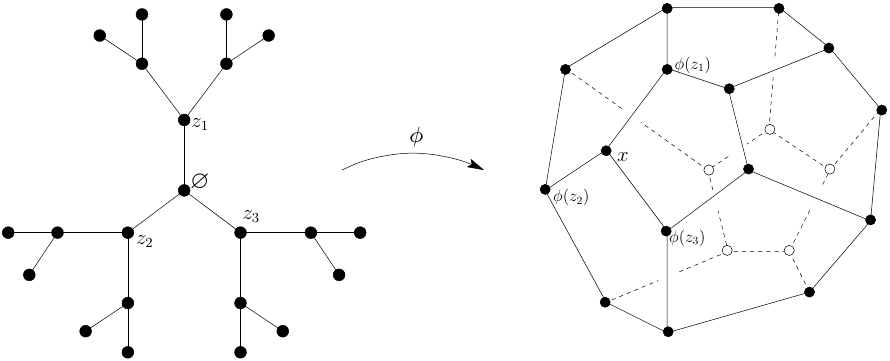}
    \caption{A covering $\phi$ of a regular graph from $x$.}
  \end{center}
\end{figure}

We use $z_i\in \mathbb T^d$, $i\in [d]$ to denote  the neighbours of
$\varnothing$ listed in some predefined order. For $x\in \mathcal V$, let
$\phi_x $ be a covering of $G$ from $x$. From now on, if $G$ was obtained
by the pairing construction, we require that $\phi_x(z_i)$ is the vertex
matched with $(x,i)$. Otherwise $\phi_x$ can be chosen arbitrarily, since
our statements will not depend on which particular choice of $\phi_x$ is
picked. For any $D \subset [d]$, we define the sets
\begin{equation}
  \label{e:BbbB}
  \mathbb B_D=\{\varnothing\}\cup\{z_i: i\in D\}
  \quad\text{and}\quad
  B_{x,D} = \{x\} \cup \{\phi_x(z_i):i\in D\}.
\end{equation}
The sets $B_{x,D}$ will be used later in the calculation of the degree
distribution of the vacant set: using an inclusion-exclusion formula, we
can express the event $\{$the degree of $x$ in $\bbV^u$ is $k\}$ in terms
of events $\{B_{x,D}\subset \mathcal V^u\}$, for $D \subset [d]$.

We first prove a technical lemma describing the graph $V$ after removing
the set $B_{x,D}$.
\begin{lemma}
  \label{l:graphs} For $K>0$, we say that the graph $G$ is \emph{$K$-good},
  if there is no $x\in V$ and $D\subset [d]$ such that:
  \begin{itemize}
    \item[(a)] $B(x,3)$ is a tree, and
    \item[(b)] either $V\setminus B_{x,D}$ is disconnected or
    $\diam (V\setminus B_{x,D})\ge K \log n$.
  \end{itemize}
    Then, there is $K>1$ such
  that $G$ is $K$-good, $\barP_{n,d}$-a.a.s.
\end{lemma}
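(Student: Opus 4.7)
The plan is to establish, $\barP_{n,d}$-a.a.s., both connectivity and an $O(\log n)$ diameter bound for $V\setminus B_{x,D}$, uniformly over $(x,D)$ with $B(x,3)$ a tree. Both parts rely on the spectral-gap bound $\lambda_G>\alpha$ provided by \eqref{e:gap} and its Cheeger-type consequences.

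For connectivity, suppose $A$ is a connected component of $V\setminus B_{x,D}$ with $|A|\le(n-|B_{x,D}|)/2$. Every $G$-edge leaving $A$ must land in $B_{x,D}$, so $|\partial_E A|\le d|B_{x,D}|\le d(d+1)$; combined with Cheeger's inequality $|\partial_E A|\ge c|A|$, this already forces $|A|\le C$ for some $C=C(d)$. I would then rule out this bounded regime in two sub-cases. If $|A|<d$, the tree condition on $B(x,3)$ prevents any $y\in V\setminus\{x\}$ from having two neighbours in $B(x,1)\supseteq B_{x,D}$ (otherwise $B(x,2)$ would contain a $4$-cycle through $x$), so $e(A,B_{x,D})\le|A|$; combining with the degree identity $e(A,B_{x,D})=d|A|-2e(A)$ and the trivial bound $e(A)\le\binom{|A|}{2}$ yields $|A|\ge d$, a contradiction. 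For $d\le|A|\le C$ I would run a first-moment computation in the pairing construction of Section~\ref{ss:pairing}: fixing $k$ in this bounded range, the expected number of triples $(A,x,D)$ with $|A|=k$ and all $dk$ half-edges of $A$ matched inside $A\cup B_{x,D}$ is at most
\begin{equation*}
    \binom{n}{k}\cdot n\cdot 2^d\cdot O\bigl(n^{-dk/2}\bigr)=O\bigl(n^{k+1-dk/2}\bigr),
\end{equation*}
which is $o(1)$ whenever $d\ge 3$ and $k\ge d$, covering exactly the range left by the previous sub-case.

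For the diameter part I would again invoke expansion: the spectral gap yields the ball-growth estimate $|B_G(y,r)|\ge\min\{(1+c')^r,n/2\}$ for some $c'=c'(\alpha,d)>0$, and hence $\diam(G)\le K_0\log n$ $\barP_{n,d}$-a.a.s. Removing the set $B_{x,D}$ of bounded size $\le d+1$ can shrink each $r$-ball by at most $O(1)$ vertices, so exponential growth survives in the graph induced on $V\setminus B_{x,D}$ at essentially the same rate, and $\diam(V\setminus B_{x,D})\le K\log n$ for some $K$ independent of $(x,D)$. The main obstacle I foresee is the pairing-model probability estimate used in the connectivity step: in the borderline case $d=k=3$ the exponent $k+1-dk/2=-1/2$ only barely gives $o(1)$, so the bound $O(n^{-dk/2})$ on the probability that the $dk$ half-edges of $A$ close into $A\cup B_{x,D}$ must be proved with essentially no slack to be lost to the counting factor $\binom{n}{k}\cdot n$.
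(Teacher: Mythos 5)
Your connectivity argument takes a genuinely different route from the paper's: the paper invokes Wormald's bound on $(s,j)$-separating sets (Lemma~2.14 of~\cite{Wor99}) and applies it directly with $s=d$, $j\ge4$ (the tree condition on $B(x,3)$ forcing $j\ge4$), whereas you combine the spectral gap with a first-moment computation in the pairing model. Your version is sound: the Cheeger bound forces $|A|\le C(d,\alpha)$ for any small-side component, the tree condition kills $|A|<d$ via the degree identity exactly as you write, and the pairing estimate $O(n^{-dk/2})$ on the probability that $dk$ half-edges close into a set of $O(1)$ vertices is standard (it follows from matching the half-edges of $A$ sequentially, at least $\lceil dk/2\rceil$ of the steps each succeeding with probability $O(1/n)$; the prefactor is a constant, not polynomial in $n$, so your concern about the $d=k=3$ case having ``no slack'' is unfounded --- $O(n^{-1/2})$ comfortably sums to $o(1)$ over the bounded range $d\le k\le C$). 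One trade-off: the paper's approach is self-contained and does not require \eqref{e:gap}, while yours leans on the spectral-gap input; in exchange you avoid the separating-set machinery.

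The diameter part has a genuine gap. The assertion that ``removing the set $B_{x,D}$ of bounded size $\le d+1$ can shrink each $r$-ball by at most $O(1)$ vertices'' is false: a vertex $w$ with $\dist_G(y,w)\le r$ may have all its short paths to $y$ passing through $B_{x,D}$, in which case $\dist_{G\setminus B_{x,D}}(y,w)$ can be much larger than $r$. Since $B_{x,D}$ sits at the root of whole branches of the (locally tree-like) neighbourhood, removing it can delete a constant \emph{fraction} of $B_G(y,r)$, not a constant number of vertices. The ball-growth argument as written therefore does not go through. The idea is salvageable if you phrase it in terms of edge boundaries rather than balls: for $A\subset V\setminus B_{x,D}$ with $|A|\le n/2$ one has
\begin{equation*}
\bigl|\partial_{E,\,G|_{V\setminus B_{x,D}}}A\bigr|
  \ge \bigl|\partial_{E,G}A\bigr| - e_G(A,B_{x,D})
  \ge c|A| - d(d+1),
\end{equation*}
so the induced subgraph retains expansion once $|A|\ge 2d(d+1)/c$, and connectivity (already proved) lets balls reach that threshold in $O(1)$ steps; exponential growth and the $O(\log n)$ diameter then follow. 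This differs from the paper's argument, which is entirely deterministic: it uses $\diam(G)\le 2\log_{d-1}n$ and the observation that removing a degree-$d$ vertex while preserving connectivity multiplies the diameter by at most $3^{d-1}$.
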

\begin{proof}
  To prove this claim we use Lemma~2.14 of \cite{Wor99} which states that
  $\barP_{n,d}$-probability that there exists a $(s,j)$-separating set in
  $G$, that is a set $S\subset V$ with $|S|=s$ such that $G\setminus S$
  contains a component of exactly $j$ vertices, satisfies
  \begin{equation}
    \label{e:separating}
    \barP_{n,d}[G \text{ has an $(s,j)$-separating set}]
    \le 3^{2+s/d}\Big(\frac{j+s}n\Big)^{j(\frac d2-1)} n^{\frac s2}
    (j+s)^{\frac32 s}.
  \end{equation}
  (This formula is used in \cite{Wor99} to prove that $G$ is
    $\mathbb P_{n,d}$-a.a.s.~$d$-connected. However, the $d$-connectedness
    cannot be used directly to prove our claim, since $|B(x,1)|=d+1$.)

  Observe first that it is sufficient to consider $D=[d]$. Indeed, if
    $x$ is such that $B(x,3)$ is a tree and the graph is connected after
  removing $B_{x,[d]}$, then it is connected after removing $B_{x,D}$ for
  any $D\subset [d]$.  In addition, obviously
  $\diam (G\setminus B_{x,D})\le \diam(G\setminus B_{x,[d]}) +2$.

  Let now $D=[d]$, that is $B_{x,D}=B(x,1)$. Assume that
  $G\setminus B(x,1)$ is disconnected. Then removing the set
  $B(x,1)\setminus \{x\}$ from $G$, divides the graph into at least three
  components. One of them is $\{x\}$, and the other ones are are
  contained in $G\setminus B(x,1)$. Since we require that $B(x,3)$ is a
  tree and since $G$ is $d$-regular, the size of these other components
  is at least $4$. We thus apply \eqref{e:separating} with $s=d$ and
  $j\ge 4$: The $\barP_{n,d}$-probability that there is $x\in V$ such
  that $B(x,3)$ is a tree and $G\setminus B(x,1)$ is disconnected is
  bounded from above by
  \begin{equation}
    \label{e:suma}
    \sum_{j= 4}^{n/2} \barP_{n,d}[G \text{ has an $(s,j)$-separating
        set}].
  \end{equation}
  The largest term in this sum, corresponding to $j=4$, is
  $O(n^{4-\frac 32 d})=o(1)$. All the remaining terms are much smaller.
  Actually, as in the proof of Theorem~2.10 of \cite{Wor99}, it can be
  shown that \eqref{e:suma} tends to $0$ as $n\to\infty$. Hence,
  $\barP_{n,d}$-a.a.s.~there is no $x\in V$ such that $B(x,3)$ is a tree
  and $G\setminus B(x,1)$ is disconnected.

  To bound the diameter of $G\setminus B(0,1)$ we use the fact that
  $\barP_{n,d}$-a.a.s.~$\diam(G)\le 2\log_{d-1}n$, see~\cite{BF82}. We
  may also assume that $G\setminus B(x,1)$ is connected, which as we
  proved occurs $\barP_{n,d}$-a.a.s.

  We now claim that
  \begin{equation}
    \label{e:remove}
    \parbox{0.85\textwidth}{removing one vertex $v$ of degree $d$ in an
      arbitrary graph while keeping it connected can increase the
      diameter of the graph at most by a factor of~$3^{d-1}$.}
  \end{equation}
  To prove this claim we first consider the removal of an edge: Removing
  one edge $e$ from a graph while keeping in connected can increase the
  diameter of the graph at most by factor $3$. To see this it is
  sufficient to consider the shortest path $\mu $ in $G\setminus\{e\}$
  connecting the vertices of $e$ (such path must exist since
    $G\setminus\{e\}$ is connected, and cannot be longer than $2\diam G$),
  and to replace the edge $e$ by the path $\mu $ in every geodesic of $G$
  that contains $e$.

  Having understood the removal of edges, we can analyse the removal of
  a vertex $v$. We first remove all but one of the edges of $G$ incident
  to $v$, in this procedure the diameter of the graph is  multiplied by
  at most $3^{d-1}$. Removing $v$ together with the last edge linking it
  to $G\setminus \{v\}$ yields the claim \eqref{e:remove}.

  The claim \eqref{e:remove} and $|B(x,1)|\le d+1$ then imply that
  $\barP_{n,d}$-a.a.s
  \begin{equation}
    \diam(G\setminus B(x,1))\le 3^{(d-1)(d+1)}\diam G\le
    3^{(d-1)(d+1)}\cdot 2 \log_{d-1}n.
  \end{equation}
  This completes the proof of the lemma.
\end{proof}

We now start controlling how the random walk visits the sets $B_{x,D}$
defined in \eqref{e:BbbB}. In the lemma below, we show that the
probability of escaping from $B_{x,D}$ for a large time can be
approximated by the escape probability on the infinite tree, defined in
\eqref{e:eB}.

\begin{lemma}
  \label{l:bbE}
  For every $x \in V_n$,
  $D \subset [d]$ and $i \in D$
  \begin{equation}
    \label{e:bbE}
    \barE_{n,d} \Big[ \Big| P^G_{\phi_x(z_i)} [ \tilde{H}_{B_{x,D}} > \log^2 n]
      - e_{\mathbb{B}_D}(z_i) \Big| \Big] \leq \frac{c \log^4 n}{n}.
  \end{equation}
  and
  \begin{equation}
    \label{e:hitx}
    \bbP_{n,d} [ \tilde{H}_{B_{x,D}} \leq \log^2 n]  \leq \frac{c \log^2 n}{n}.
  \end{equation}
\end{lemma}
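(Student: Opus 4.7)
The plan is to deduce both bounds by realising the walk on $G$ and a walk $(Y_t)$ on the infinite tree $\mathbb T^d$ on the same probability space through the covering $\phi_x$: setting $X_t = \phi_x(Y_t)$ with $Y_0 = z_i$ gives a walk with law $P^G_{\phi_x(z_i)}$. The shorter inequality \eqref{e:hitx} uses only stationarity. Under $\bbP_{n,d}$ the walk is started from the uniform distribution on $V_n$, which is stationary for both the continuous-time process and its embedded chain $(\hat X_k)$; therefore $\bbP_{n,d}[\hat X_k \in B_{x,D}] \le |B_{x,D}|/n \le (d+1)/n$ for every $k$. Since $\{\tilde H_{B_{x,D}} \le \log^2 n\}$ forces $\hat X_k \in B_{x,D}$ for some $1 \le k \le N_{\log^2 n}$, a union bound combined with a Poisson tail estimate for $N_{\log^2 n}$ yields \eqref{e:hitx}.

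For \eqref{e:bbE}, the key identity is $\phi_x^{-1}(B_{x,D}) \supset \mathbb B_D$, so that $\tilde H^X_{B_{x,D}} = \inf\{t > 0 : Y_t \in \phi_x^{-1}(B_{x,D})\} \le \tilde H^Y_{\mathbb B_D}$. Hence
\begin{equation*}
\bigl|P^G_{\phi_x(z_i)}[\tilde H_{B_{x,D}} > \log^2 n] - e_{\mathbb B_D}(z_i)\bigr|
\le P^{\mathbb T^d}_{z_i}\bigl[\log^2 n < \tilde H_{\mathbb B_D} < \infty\bigr]
+ P^{\mathbb T^d}_{z_i}\bigl[\exists\, s \le \log^2 n : Y_s \in \phi_x^{-1}(B_{x,D}) \setminus \mathbb B_D\bigr].
\end{equation*}
The first term is exponentially small in $\log^2 n$: by \eqref{e:escape} the walk on $\mathbb T^d$ drifts away from $\mathbb B_D$ at positive speed, and the return probability from depth $k$ is bounded by $c(d-1)^{-k}$, so a standard large-deviations estimate for the depth of $Y_{\log^2 n}$ gives a bound $\le e^{-c\log^2 n}$, negligible compared to $\log^4 n/n$.

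The second term depends on $G$ and I would bound its $\barE_{n,d}$-average. By Fubini,
\begin{equation*}
\barE_{n,d}\bigl[P^{\mathbb T^d}_{z_i}[\exists s \le \log^2 n : Y_s \in \phi_x^{-1}(B_{x,D}) \setminus \mathbb B_D]\bigr]
\le E^{\mathbb T^d}_{z_i}\Big[\sum_{y \in \mathcal V^Y \setminus \mathbb B_D} \barP_{n,d}[\phi_x(y) \in B_{x,D}]\Big],
\end{equation*}
where $\mathcal V^Y = \{Y_s : s \le \log^2 n\}$. To bound each summand uniformly in $y$, I would reveal the pairing sequentially as in Section~\ref{ss:pairing}: first sample the $d$ pairings incident to $x$ (which determines $B_{x,D} = \{x\} \cup \{\phi_x(z_j) : j \in D\}$), and then expose the pairings along the path from $\varnothing$ to $y$ in the tree. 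Each subsequent exposure picks a half-edge uniformly from the remaining unpaired pool, and since at most $O(\log^2 n)$ half-edges get matched in this procedure out of $nd$, the conditional probability that $\phi_x(y)$ lies in $B_{x,D}$ is at most $|B_{x,D}|\,d/(nd - O(\log^2 n)) \le c/n$. Combined with $E^{\mathbb T^d}[|\mathcal V^Y|] \le 1 + E^{\mathbb T^d}[N^Y_{\log^2 n}] = 1 + \log^2 n$, this gives an overall bound of order $\log^2 n/n$, well within the asserted $c\log^4 n/n$.

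The main obstacle is the uniform-in-$y$ bound $\barP_{n,d}[\phi_x(y) \in B_{x,D}] \le c/n$: one must order the exposures so that $B_{x,D}$ is fully determined before the pairing that defines $\phi_x(y)$, and must keep track of the number of already-matched half-edges along the way to ensure that the pool of unpaired half-edges remains of size $\sim nd$, so that the next match is close to uniform. Once this is in place, both inequalities follow in a few lines.
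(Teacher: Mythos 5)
Your treatment of \eqref{e:hitx} and your decomposition of \eqref{e:bbE} into the tree-escape error plus the probability that the tree walk meets $\phi_x^{-1}(B_{x,D})\setminus\mathbb B_D$ before time $\log^2 n$ are exactly the paper's; the tree-drift estimate for the first term is also fine. The gap is precisely where you locate it yourself: the uniform bound $\barP_{n,d}[\phi_x(y)\in B_{x,D}]\le c/n$ for a fixed tree vertex $y$. When you expose the pairing along the path from $\varnothing$ to $y$, it is not true that the half-edge determining the next image vertex is always matched ``freshly'' and uniformly: if the image of the path has revisited an already-exposed vertex (a collision), the relevant half-edge may already be matched, and then $\phi_x(y)$ is determined by earlier exposures and can lie in $B_{x,D}$ with conditional probability of order one (e.g.\ a collision into a neighbour of $x$ followed by the forced step back to $x$). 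The honest repair is to add the probability of a collision somewhere along the path, which at step $j$ is of order $(j+d)/n$; for $y$ at depth $k\le 2\log^2 n$ this gives a per-$y$ bound of order $k^2/n$, not $c/n$. Summing over the $O(\log^2 n)$ vertices of $\mathcal V^Y$ then yields $c\log^6 n/n$, which misses the stated $c\log^4 n/n$ (it would still suffice for the applications after adjusting the polylog exponents in Corollary~\ref{c:escape}, Theorem~\ref{t:Vuvol} and \eqref{e:nconv}, but it does not prove the lemma as stated).

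The inefficiency comes from doing a union bound over visited tree vertices and then, inside each term, another union bound along the path. The paper avoids this double counting: it observes that visiting $\phi_x^{-1}(B_{x,D})\setminus\mathbb B_D$ forces the explored subgraph $G|_{B(X_0,2)\cup\{X_s:s\le\log^2 n\}}$ to contain a cycle, and bounds once, via the sequential pairing construction run along the trajectory, the probability that any of the at most $2\log^2 n$ exposures creates a collision; each such exposure collides with probability at most $c\log^2 n/n$, giving the single factor $c\log^4 n/n$. If you replace your per-$y$ first-moment argument by this one global collision bound (or otherwise prove a genuinely uniform $c/n$ estimate, which your sketch does not), the rest of your argument goes through.
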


\begin{proof}
  To simplify the notation we write $B$, $\mathbb B$, $z$ and $\phi (z)$  for
  $B_D$, $\mathbb B_{x,D}$, $z_i$ and $\phi_x(z_i)$. Using the fact that
  $\phi_x$ maps a random walk on $\mathbb T^d$ to a random walk on $G$,
  we can write
  \begin{equation}
    \begin{split}
      P^G_{\phi (z)} [ &\tilde{H}_{B} \leq \log^2 n]
      = P^{\mathbb{T}^d}_z [ \tilde{H}_{\phi_x^{-1}(B)} \leq \log^2 n]
      \\&= P^{\mathbb{T}^d}_z [ \tilde{H}_{\mathbb{B}} \leq \log^2 n]
       + P^{\mathbb{T}^d}_z [ \tilde{H}_{\phi_x^{-1}(B)\setminus \mathbb{B}}
        \leq \log^2 n, \tilde{H}_{\mathbb{B}} > \log^2 n].
    \end{split}
  \end{equation}
  Therefore, the left-hand side of \eqref{e:bbE} can be bounded from above
  by
  \begin{equation}
    \label{e:splithit0}
    \Big| P^{\mathbb{T}^d}_z [ \tilde{H}_{\mathbb B} > \log^2 n] - e_{\mathbb{B}}(z) \Big|
    + \barE_{n,d} \Big[ P^{\mathbb{T}^d}_z
      [ \tilde{H}_{\phi_x^{-1}(B)\setminus \mathbb{B}} \leq \log^2 n] \Big].
  \end{equation}
  Using the Markov property at time $\log^2 n$ and the definition of
  $e_{\mathbb{B}}(z)$, the first term equals
  \begin{equation}
    \begin{split}
      &P^{\mathbb{T}^d}_z [\log^2 n < \tilde{H}_{\mathbb B} < \infty]
      \\&\leq P^{\mathbb{T}^d}_z \Big[ d(\varnothing, X_{\log^2 n})
        \leq \frac{d-2}{2d} \log^{2} n \Big]
       + \sup_{u: d(u,\varnothing) > \frac{d-2}{2d} \log^{2} n}
      P^{\mathbb{T}^d}_u \Big[ H_\varnothing < \infty \Big].
    \end{split}
  \end{equation}
  Since $d(X_t,\varnothing)$ under $P^{\mathbb{T}^d}_z$ is a random walk on
  $\mathbb{N}$ with expected drift given by $(d-2)/d$, both terms above are
  bounded by $c \exp\{-c' \log^2 n\}$.

  To bound the second term in \eqref{e:splithit0}, note that if at time $t$
  the random walk on $\mathbb T_d$ started from $z$ visits a point in
  $\phi_x^{-1}(B_{x,D})\setminus \mathbb{B}$, then the trajectory of the
  image walk on $G$ together with $B(\phi(z),2)$ contains a cycle in $G$.
  Therefore, denoting by
  $G|_A$ the subgraph of $G$ generated by $A\subset V_n$,
  \begin{equation}
    \barE_{n,d} \Big[ P^{\mathbb{T}^d}_z
      [ \tilde{H}_{\phi_x^{-1}(B)\setminus \mathbb{B}} \leq \log^2 n] \Big]
    \le
    \bbP_{n,d}\Big[\text{$G|_{ B(X_0,2)\cup \{X_s:s\le \log^2 n\}}$
        contains a cycle} \Big].
  \end{equation}
  Taking care of the possibility that the continuous-time random walk
  makes more than $2\log^2 n $ steps before time $\log^2 n$, using the
  notation from Section~\ref{ss:rwnot}, this is
  bounded from above by
  \begin{equation}
    \label{e:mnb}
    \begin{split}
      &P_z^{\mathbb T^d}[N_{\log^2 n}\ge 2\log^2 n]
      +
      \bbP_{n,d}\Big[\text{$G|_{ B(X_0,2)\cup \{\hat X_i:i\le 2 \log^2 n\}}$  contains a cycle} \Big]
        \end{split}
  \end{equation}
  The random variable $N_{\log^2 n}$ has Poisson distribution with mean
  $\log^2 n$, therefore the first term in \eqref{e:mnb} is smaller than
  $ c e^{-c'\log^2 n}$. To bound the second term, observe that the
  considered subgraph can be constructed inductively by the following
  variant of the construction from Section~\ref{ss:pairing}:
  \begin{enumerate}
    \item Choose $\hat X_0$ uniformly at random in $V_n$. Use the pairing
    construction of Section~\ref{ss:pairing} to construct the set
    $B(\hat X_0,2)$. This requires creating at most $d+d(d-1)$ pairs.

    \item Let $\hat X_1$ be a uniformly chosen neighbour of $\hat X_0$.
    (All neighbours of $X_0$ are known from the first step.)

    \item Repeat for all $i\in\{2,\dots,2 \log^2 n\}$ the following steps:
    \begin{enumerate}
      \item Choose $Z_i$ uniformly in $[d]$, independently of the
      previous randomness.
      \item If the half-edge $(\hat X_{i-1},Z_i)$ is not yet matched, then
      match it with an half-edge chosen uniformly among the remaining
      half-edges, as in the pairing construction.
      \item Let $\hat X_i$ be the vertex that is matched with the
      half-edge $(\hat X_{i-1},Z_i)$.
    \end{enumerate}
  \end{enumerate}
  The probability that a cycle is created in the step (1) is easily
  bounded by $c/n$. It is not possible to create any cycle in the step (2).
  In the step (3) the cycle is created only when the half-edge
  $(\hat X_{i-1},Z_i)$ is not yet matched (otherwise we do not add any new
    edge to the subgraph) and when the vertex $\hat X_i$ was
  already `visited by the algorithm', that is it has some matched
  half-edges from the previous steps. Since the algorithm visits at most
  $d+d(d-1)+2\log^2 n$ vertices, the probability to match
  $(\hat X_{i-1},Z_i)$  with an already visited vertex is smaller than
  $cn^{-1} \log^2 n$. Therefore the probability to create the cycle in the
  step (3) is at most $cn^{-1} \log^4 n$. In consequence, the second term
  in \eqref{e:splithit0} is smaller than
  $c e^{-c'\log^2 n}+c n^{-1} \log^4 n$
  which establishes \eqref{e:bbE}.

  In order to prove \eqref{e:hitx}, observe that
  \begin{equation}
      \bbP_{n,d}   [ H_{B} \leq \log^2 n] \le
      \bbP_{n,d} [N_{\log^2 n}\ge 2\log^2 n]+
      \sum_{i=0}^{2\log^2 n}\bbP_{n,d}\Big[
        \hat X_i\in B \Big].
  \end{equation}
  Since under $\bbP_{n,d}$, the random vertex $\hat X_i$ is uniformly
  distributed in $V_n$,
  $\bbP_{n,d}[\hat X_i\in B]\le n^{-1} |B| \le (d+1)n^{-1}$. Hence,
  \begin{equation}
      \bbP_{n,d}   [ H_{B} \leq \log^2 n]
      \leq c' e^{-c \log^2 n} +2 (d+1)n^{-1} \log^2 n  \leq c n^{-1} \log^2 n.
  \end{equation}
  This completes the proof of Lemma~\ref{l:bbE}.
\end{proof}

The previous lemma has a simple corollary.
\begin{corollary}
  \label{c:escape}
  Fix $D \subset [ d]$ and $i \in D$, then both
  $\barP_{n,d}$-a.a.s.~and $\mathbb P_{n,d}$-a.a.s.
  \begin{equation}
    \Big|\big\{
      x \in V_n: \big|P^G_{\phi_x(z_i)}[\tilde{H}_{B_{x,D}} > \log^2 n]
      - e_{\mathbb B_{D}}(z_i) \big| > n^{-1/2}
      \big\}\Big|
    \leq (\log^5 n) n^{1/2}
  \end{equation}
  and
  \begin{equation}
    \Big|\big\{x \in V_n: P^G[H_{B_{x,D}} \leq \log^2 n]
      > n^{-1/2} \big\}\Big|
    \leq (\log^3 n) n^{1/2}.
  \end{equation}
\end{corollary}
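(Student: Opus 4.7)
The plan is to derive both bounds as routine consequences of Lemma~\ref{l:bbE}, by two successive applications of Markov's inequality, and then to transfer the resulting $\barP_{n,d}$-a.a.s.~statements to $\mathbb P_{n,d}$-a.a.s.~statements via~\eqref{e:simple}.

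For the first bound I would fix $D\subset[d]$ and $i\in D$ and set, for each $x\in V_n$,
\begin{equation*}
Y_x = \bbone\big\{\big|P^G_{\phi_x(z_i)}[\tilde H_{B_{x,D}}>\log^2 n]-e_{\mathbb B_D}(z_i)\big|>n^{-1/2}\big\}.
\end{equation*}
Markov's inequality applied to~\eqref{e:bbE} gives $\barP_{n,d}[Y_x=1]\le c n^{-1/2}\log^4 n$, so by linearity of expectation $\barE_{n,d}\big[\sum_{x\in V_n} Y_x\big]\le c n^{1/2}\log^4 n$. A second application of Markov then yields
\begin{equation*}
\barP_{n,d}\Big[\sum_{x\in V_n}Y_x>(\log^5 n) n^{1/2}\Big]\le \frac{c}{\log n}\xrightarrow{n\to\infty}0,
\end{equation*}
which is the first statement $\barP_{n,d}$-a.a.s.

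For the second bound the same scheme works with $Z_x=\bbone\{P^G[H_{B_{x,D}}\le\log^2 n]>n^{-1/2}\}$. The key input is that $\bbP_{n,d}[H_{B_{x,D}}\le\log^2 n]=\barE_{n,d}[P^G[H_{B_{x,D}}\le\log^2 n]]\le c n^{-1}\log^2 n$; note that~\eqref{e:hitx} is formally stated for $\tilde H$, but the proof given there actually establishes this stronger bound for $H$ via a union bound over the discrete skeleton $\hat X_0,\dots,\hat X_{2\log^2 n}$. Two Markov bounds then give $\barP_{n,d}[\sum_x Z_x>(\log^3 n) n^{1/2}]\le c/\log n\to 0$.

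To pass from $\barP_{n,d}$-a.a.s.~to $\mathbb P_{n,d}$-a.a.s.~I would invoke~\eqref{e:simple}: since $\barP_{n,d}[G\in\mathcal G_{n,d}]\ge c>0$ for $n$ large, one has $\mathbb P_{n,d}[A^c]\le c^{-1}\barP_{n,d}[A^c]$, so any sequence of events of $\barP_{n,d}$-probability tending to one has the same property under $\mathbb P_{n,d}$. There is no substantive obstacle here --- the argument is pure bookkeeping on top of Lemma~\ref{l:bbE} --- but one does have to be careful in the second statement to bound $H$ rather than $\tilde H$, which is why I would cite the proof of~\eqref{e:hitx} rather than the displayed inequality itself.
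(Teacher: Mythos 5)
Your proposal is correct and follows essentially the same route as the paper: both bounds come from Markov's inequality applied to Lemma~\ref{l:bbE} (the paper phrases it as one Markov bound on the sums $\sum_x |P^G_{\phi_x(z_i)}[\tilde H_{B_{x,D}}>\log^2 n]-e_{\mathbb B_D}(z_i)|$ and $\sum_x P^G[H_{B_{x,D}}\le\log^2 n]$, which is the same computation as your two-step Markov argument), followed by the transfer to $\mathbb P_{n,d}$ via the remark after \eqref{e:simple}. Your observation that the second bound needs the entrance time $H$ rather than $\tilde H$, and that the proof of \eqref{e:hitx} in fact bounds $\bbP_{n,d}[H_{B}\le\log^2 n]$, is exactly the reading the paper relies on implicitly.
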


\begin{proof}
  Note that the complements of the above events are respectively contained
  in the events
  $\sum_{x\in V_n} |P^G_{\phi_x(z_i)}[\tilde{H}_{B_{x,D}} \leq \log^2 n] - e_{\mathbb{B}}(z_i)| > \log^5 n$
  and $\sum_{x\in V_n} P[H_{B_{x,D}} \leq \log^2 n] > \log^3 n$. Thus,
  the $\barP_{n,d}$-a.a.s.~statements of the corollary are implied by the
  Markov inequality and  Lemma~\ref{l:bbE}. The
  $\mathbb P_{n,d}$-a.a.s.~statements then follow using the
  remark below \eqref{e:simple}.
\end{proof}

In what follows we will make use of the quasi-stationary distribution
with respect to a set $B \subset V$. The usual definition of this
distribution  is given in the Appendix, see \eqref{e:sigmavone}. The
quasi-stationary distribution can be thought as the asymptotic
distribution of the position of a random walk conditioned not to visit $B$.
To proceed, we will need the following lemma which gives the rate of
convergence of such conditioned walk towards the quasi-stationary
distribution.

\begin{lemma}
  \label{l:quasi}
  Let $t_n=\log^2n$. Then, $\barP_{n,d}$-a.a.s., for any $x$ such that
  $B(x,3)$ is a tree and for any connected $A\subset B(x,1)$
  \begin{equation}
    \label{e:quasi}
    \sup_{x, y \in {V} \setminus A} \Big| P^G_x[Y_{t_n} = y| H_A > t_n] -
    \sigma_A(y) \Big| \leq c e^{-c' \log^2 n}.
  \end{equation}
\end{lemma}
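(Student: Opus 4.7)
The plan is to expand the conditional probability in the eigenbasis of the sub-Markov semigroup $\tilde P_t(x,y):=P^G_x[X_t=y,\,H_A>t]$ acting on $V':=V\setminus A$. Since $G$ is $d$-regular, the corresponding Dirichlet generator is self-adjoint on $L^2(\pi|_{V'})$, admitting an orthonormal eigenbasis $(\phi_k)_{k\ge 1}$ with eigenvalues $\lambda_1(A)<\lambda_2(A)\le\dots$; Perron--Frobenius gives $\phi_1>0$ on $V'$, and the quasi-stationary distribution takes the form $\sigma_A(y)=\phi_1(y)\pi_y/\langle\phi_1,\bbone\rangle_\pi$.

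First I would show that $\lambda_2(A)-\lambda_1(A)$ is bounded below by a positive constant, $\barP_{n,d}$-a.a.s. Inserting the test function $\bbone_{V'}$ in the Rayleigh quotient gives $\lambda_1(A)\le c/n$, because $\mathcal D(\bbone_{V'},\bbone_{V'})$ counts only the $O(1)$ edges between $A$ and $V'$. The Courant--Fischer min-max applied to the $|A|$-codimensional restriction $\{f:f|_A=0\}$ then yields $\lambda_k(A)\ge\mu_{k-1}$ for the eigenvalues $\mu_0=0<\mu_1=\lambda_G\le\dots$ of $-\mathcal L$, so together with~\eqref{e:gap} this gives $\lambda_2(A)\ge\alpha$, whence $\lambda_2(A)-\lambda_1(A)\ge\alpha/2$ for $n$ large.

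Next I would write the spectral expansion
\[
\tilde P_t(x,y)=e^{-\lambda_1(A)t}\phi_1(x)\phi_1(y)\pi_y+R_t(x,y),
\]
and bound the remainder by Cauchy--Schwarz combined with the identity $\sum_k e^{-\lambda_k(A)s}\phi_k(x)^2\pi_x=\tilde P_s(x,x)$ and the comparison $\tilde P_s(x,x)\le P_s(x,x)\le 2\pi_x$ for $s\ge c\log n$ (which holds by~\eqref{e:I}). This gives $|R_t(x,y)|\le c\pi_y e^{-\lambda_2(A)t/2}$; summing over $y$ yields an analogous expansion for $P^G_x[H_A>t]$. Dividing and setting $t=t_n=\log^2 n$ produces the desired error $e^{-c'\log^2 n}$, provided both $\phi_1(x)$ and $\langle\phi_1,\bbone\rangle_\pi$ are bounded below by positive constants.

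The main obstacle is thus to obtain the uniform lower bound $\phi_1\ge c>0$ on $V'$. Intuitively, since $|A|\le d+1$ in a well-mixing graph, the killed operator is a small perturbation of $-\mathcal L$ and $\phi_1$ should be close to the normalized constant $\bbone_{V'}/\|\bbone_{V'}\|_\pi$. Quantitatively, decomposing this test function as $a\phi_1+f_\perp$ in $L^2(\pi)$, the identity $\mathcal D(a\phi_1+f_\perp,a\phi_1+f_\perp)=a^2\lambda_1(A)+\mathcal D(f_\perp,f_\perp)\le c/n$ together with $\mathcal D(f_\perp,f_\perp)\ge\alpha\|f_\perp\|_\pi^2$ gives $\|f_\perp\|_\pi^2\le c/n$ and $a\ge 1-c/n$, whence $\langle\phi_1,\bbone\rangle_\pi\ge c>0$. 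Promoting this $L^2$-closeness to a pointwise bound uses a smoothing step: the eigenfunction identity $\phi_1(x)=e^{\lambda_1(A)s}(\tilde P_s\phi_1)(x)$ with $s$ of order $\log n$, combined with a Harnack-type estimate for the killed semigroup on a well-mixing graph (obtained from~\eqref{e:I} and the easy bound $P^G_x[H_A\le s]\le cs/n$), gives $\phi_1(x)\ge c\langle\phi_1,\bbone\rangle_\pi\ge c'>0$ uniformly in $x\in V'$.
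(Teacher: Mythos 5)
Most of your architecture parallels the paper's: the gap bound $\lambda_2(A)-\lambda_1(A)\ge c>0$ (you get $\lambda_1(A)\le c/n$ by a Rayleigh-quotient computation with $\bbone_{V\setminus A}$ instead of the identity $\lambda_1^A=1/E_{\sigma_A}[H_A]$ used in the paper, and $\lambda_2(A)\ge\lambda_G$ by interlacing, exactly as in Lemma~\ref{l:AB}), and the spectral expansion of the killed semigroup with a remainder of order $e^{-c\log^2 n}$ is essentially Lemma~\ref{l:quasigen}. Your $L^2$ argument giving $\<v_1^A,\bbone\>\ge c$ is also sound. The problem is the final, and crucial, step: the uniform pointwise lower bound on the Perron eigenfunction.

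The input you invoke there, ``the easy bound $P^G_x[H_A\le s]\le cs/n$'', is false uniformly in $x\in V\setminus A$: it holds for the walk started from the stationary distribution (this is how \eqref{e:hitx} is used in the paper), but for a vertex $x$ adjacent to $A$ one has $P^G_x[H_A\le s]\ge c>0$ already from the first jump. Consequently your smoothing step $\phi_1(x)=e^{\lambda_1(A)s}(\tilde P_s\phi_1)(x)\ge E_x[\phi_1(X_s)]-\|\phi_1\|_\infty P_x[H_A\le s]$ produces a useless (possibly negative) bound precisely at the vertices neighbouring $A$, which is exactly where a lower bound on $\phi_1$ (equivalently on $\sigma_A$) is delicate; note also that your proposal never uses the hypotheses that $B(x,3)$ is a tree and $A\subset B(x,1)$, nor Lemma~\ref{l:graphs}, which is a sign that the hard case has been bypassed rather than handled. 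Two repairs are possible. Either observe that, since the numerator in the spectral estimate is $e^{-c\log^2 n}$, a polynomial lower bound $\inf_z\sigma_A(z)\ge cn^{-c'}$ suffices, and obtain it as the paper does: by reversibility, $\sigma_A(z)\ge P^G_z[H_{z'}<H_A]\,\sigma_A(z')$, and $P^G_z[H_{z'}<H_A]\ge d^{-cK\log n}\ge cn^{-c'}$ because, by Lemma~\ref{l:graphs}, $V\setminus A$ is a.a.s.\ connected with diameter at most $K\log n$ (this connectivity/diameter input is missing from your proof and cannot be dispensed with in this route). Or, if you insist on a constant lower bound for $\phi_1$, you must treat vertices within bounded distance of $A$ separately, e.g.\ using that $B(x,3)$ is a tree to get a constant probability of escaping to distance $3$ without touching $A$ and then applying your smoothing bound from there; as written, the proposal has a genuine gap at this point.
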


\begin{proof}
  By Lemma~\ref{l:graphs}, $V\setminus A$ is $\barP_{n,d}$-a.a.s.~connected
  and has diameter smaller than $K\log n$. To prove the claim of the
  lemma we are going to make use of Lemmas~\ref{l:AB} and
  \ref{l:quasigen}, which are general results on Markov chains presented
  in the Appendix. We use the notation introduced there, in particular
  $0\le\lambda_1^A\le \lambda_2^A\le \dots$ stand for the eigenvalues of
  the $(-\mathcal L^A)$ where $\mathcal L^A$ is the generator of the
  random walk killed on hitting $A$.

  From \eqref{e:gap}, $\lambda_G\ge \alpha >0$,   $\mathbb P_{n,d}$-a.a.s.
  Therefore, using Lemma~\ref{l:AB}, we obtain that
  \begin{equation}
    \lambda^A_{2} - \lambda^A_{1} \geq \alpha  - \frac{1}{E[H_A]},
    \qquad \mathbb P_{n,d}-\text{a.a.s.},
  \end{equation}

  To obtain  an upper bound on $E[H_A]^{-1}$, we use Proposition~3.1 and
  (2.10) of \cite{CTW11}. Using this proposition with $A = A$ and
  $C = V \setminus A$, we obtain
  \begin{equation}
    \label{e:crudeEHB}
    \frac{1}{E[H_A]} \le c \pi(A) \leq \frac{c}{n}.
  \end{equation}
  This implies that $\mathbb
  P_{n,d}$-a.a.s.~$\lambda^A_{2} - \lambda^A_{1} \ge c > 0$.

  Using the above fact together with Lemma~\ref{l:quasigen} (observing
    that $\pi(x)=1/n$ on regular graphs) we obtain that $\barP_{n,d}$-a.a.s.
  \begin{equation}
    \label{e:bdwithsigma}
    \sup_{x, y \in {V} \setminus A} \big| P^G_x[Y_{t_n} = y| H_A > {t_n}]
    - \sigma_A(y) \big|
    \leq \frac {c n^{3/2} \exp\{- c \log^2 n\} }
    {\inf_{z \in V\setminus A}\sigma_A(z)}.
  \end{equation}

  We have to bound the infimum in the denominator.
  For this, take $z \in V \setminus A$ and any $t \geq 0$. By
  reversibility, for any $z' \in V \setminus A$,
  \begin{equation}
    \label{e:zzprime}
    P^G_{z'} [X_t = z | H_A > t]
    = P^G_{z} [X_t = z' | H_A > t]
    \frac{P^G_z[H_A > t]}{P^G_{z'}[H_A > t]}.
  \end{equation}
  In order to bound the above ratio, note that
  \begin{equation}
    P^G_z[H_A > t] \geq P^G_z[H_{z'} < H_A, H_A \circ \theta_{H_{z'}} > t]
    = P^G_z[H_{z'} < H_A] P^G_{z'}[H_A > t].
  \end{equation}
  As $\barP_{n,d}$-a.a.s.~the graph induced by $V \setminus A$ has
  diameter at most $K \log n$, we can find a path $\gamma$ with length at
  most $K \log n$, connecting $z$ and $z'$ and not passing through $A$.
  This gives us that $P_z[H_{z'} < H_A] \ge d^{-c \log n} \ge c n^{-c'}$.
  From Lemma~\ref{l:quasigen}
  $\lim_{t \to \infty} P_w[X_t = z| H_A > t] = \sigma_A(z)$ uniformly for
  all $w,z \in V \setminus A$. Therefore, taking the limit $t\to\infty$
  in \eqref{e:zzprime}, $\sigma_A(z)\ge c \sigma_A(z') n^{-c'}$. Together
  with the fact that $\sigma_A$ is a probability measure this yields
  \begin{equation}
    \inf_{z \in V \setminus A} \sigma_A(z) \geq c n^{-c'}.
  \end{equation}
  Using the above result with \eqref{e:bdwithsigma} finishes the proof
  of Lemma~\ref{l:quasi}.
\end{proof}

\section{Degree sequence of the vacant graph}
\label{s:degreesequence}

We are now in position to estimate the typical degree sequence of the
vacant graph $\mathcal{V}^u_G$ under $\bbP_{n,d}$. At this point it is
instructive to mention the relation between this set and the random
interlacements process on the $d$-regular tree $\mathbb{T}^d$.

The model of random interlacements on transient weighted graphs is
constructed in \cite{Tei09}, see also \cite{Szn10} for the original
construction of the model in the particular case of $\mathbb{Z}^d$,
$d \geq 3$. Random interlacement on $\mathbb T^d$ can be understood as  a
measure $Q^u$ on the space $\{0,1\}^{\mathbb{T}^d}$ which samples a
random subset $\mathcal{V}^u_{{\mathbb T^d}}$ of $\mathbb{T}^d$ (called
  the vacant set left by random interlacements at level $u$)
characterised by the following:
\begin{equation}
  \label{e:charQu}
  Q^u [ K \subset \mathcal V^u_{\mathbb T^d}]
  = \exp \{ -u \, \capacity(K)\},\qquad
  \text{ for every finite $K \subset \mathbb{T}^d$},
\end{equation}
where $\capacity(K) = \sum_{x \in  K} e_{K} (x)$
for $e_{K}$ as in \eqref{e:eB}, cf.~(2.27) of \cite{Tei09}.

Intuitively speaking, the vacant set of the random interlacement
$\mathcal{V}^u_{\mathbb T^d}$ gives the asymptotic local picture of
$\mathcal{V}^u_G$ under $\bP_{n,d}$ as $n$ tends to infinity, see
Proposition~6.3 of \cite{CTW11}. An important fact about random
interlacements on $\mathbb T^d$ is that the law of the vacant cluster of
the root $\varnothing \in \mathbb{T}^d$ under $Q^u$ is the same as the
law of a certain (inhomogeneous) Galton-Watson tree. More precisely, the
probability that $\varnothing$ is vacant equals
$e^{-u\capacity(\varnothing)}=\exp\{-u\frac{d-2}{d-1}\}$.  Given that
$\varnothing\in \mathcal V^u_{\mathbb T^d}$, the offspring distribution
of the root is binomial with parameters $d$ and
\begin{equation}
  \label{e:pu}
  p_u=\exp\Big\{-u\frac{(d-2)^2}{d(d-1)}\Big\},
\end{equation}
and the offspring distribution of all remaining individuals is binomial
with parameters $d-1$ and $p_u$.
Using this characterisation it is easy to compute the probability  that
the degree of the root in $\mathcal V^u_{\mathbb T^d}$ is $i$, $i=0,\dots, d$.
\begin{equation}
  \begin{split}
    \label{e:dui}
    d^u_i&:=Q^u[\text{degree of $\varnothing$ in
    $\mathcal V^u_{\mathbb T^d}$ equals $i$}]
    \\&=Q^u[\text{$\varnothing$ is vacant and has exactly $i$ offsprings}]
    \\&=e^{-u\frac{d-2}{d-1}}\binom d i p_u^i (1-p_u)^{d-i}.
  \end{split}
\end{equation}

We now explain the relation between $d^u_i$ and the degree sequence of
$\mathcal{V}^u_G$. This relation was already obtained in a weaker form in
Theorem~3 of \cite{CF10} and could also be extracted from \cite{CTW11}
Proposition~6.3. However the finer control of errors obtained in
Theorem~\ref{t:Vuvol} is crucial if one wants to stay inside the critical
window.

Recall from Section~\ref{ss:distvacset} that $\mathcal D^u$ denotes the
degree sequence of the vacant graph $\bbV^u$ and that, for any
degree sequence $\boldsymbol d$, $n_i(\boldsymbol d)$ denotes the number
of vertices with degree $i$ in $\boldsymbol d$.
\begin{theorem}
  \label{t:Vuvol}
  For every $u>0$ and every $i\in\{0,\dots,d\}$,
  \begin{equation}
    \big|E^G[n_i(\mathcal D^u)] - n d_i^u \big| \leq c (\log^5 n) n^{1/2},
    \quad
    \barP_{n,d}\text{-a.a.s}.
  \end{equation}
\end{theorem}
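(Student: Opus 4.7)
The plan is to prove a stronger per-vertex statement: for each $D\subseteq[d]$ and all but $O(n^{1/2}\log^{O(1)}n)$ vertices $x\in V_n$,
\begin{equation*}
  \bigl|P^G[B_{x,D}\subseteq\mathcal V^u]-\exp\{-u\,\capacity(\mathbb B_D)\}\bigr|
  \le cn^{-1/2}\log^{O(1)}n,\qquad\barP_{n,d}\text{-a.a.s.}
\end{equation*}
Given this local estimate, the theorem follows by inclusion-exclusion. Writing $A_j=\{\phi_x(z_j)\in\mathcal V^u\}$, for $i\ge 1$ the event $\{\mathcal D^u(x)=i\}$ equals $\{x\in\mathcal V^u\}\cap\{\sum_j\mathbf 1_{A_j}=i\}$, and expanding $\prod_{j\notin S}(1-\mathbf 1_{A_j})$ presents $P^G[\mathcal D^u(x)=i]$ as a signed combination, with at most $2^d$ terms, of probabilities $P^G[B_{x,D'}\subseteq\mathcal V^u]$ with $|D'|\ge i$. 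The same combination of $\exp\{-u\,\capacity(\mathbb B_{D'})\}$ reproduces $d^u_i$ by the characterisation \eqref{e:charQu} of the vacant set of random interlacements on $\mathbb T^d$. Summing over $x\in V_n$, and using $n_0(\mathcal D^u)=n-\sum_{i\ge 1}n_i(\mathcal D^u)$ to cover $i=0$, yields the desired bound.

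To establish the per-vertex estimate, first call $x$ \emph{good} if $B(x,3)$ is a tree in $G$; by a standard first-moment calculation on short cycles in $\mathcal M_{n,d}$, the number of bad $x$ is $O(\log n)$, $\barP_{n,d}$-a.a.s., hence contributes $o(\sqrt n)$ to $E^G[n_i(\mathcal D^u)]$. Fix a good $x$ and $D$, set $B:=B_{x,D}$ and $t_n:=\log^2 n$. Applying the Markov property at time $t_n$ gives
\begin{equation*}
  P^G[H_B>un]=P^G[H_B>t_n]\sum_{y\in V\setminus B}P^G[X_{t_n}=y\mid H_B>t_n]\,P^G_y[H_B>un-t_n].
\end{equation*}
By Lemma~\ref{l:bbE}, $P^G[H_B>t_n]=1-O(n^{-1}\log^2 n)$, and by Lemma~\ref{l:quasi}, the conditional law of $X_{t_n}$ is within $ce^{-c'\log^2 n}$ of the quasi-stationary distribution $\sigma_B$, both uniformly in $x$ and $D$. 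Since $P^G_{\sigma_B}[H_B>s]=e^{-s\lambda_1^B}$ by definition of $\sigma_B$ (where $\lambda_1^B$ is the principal Dirichlet eigenvalue of $-\mathcal L$ on $V\setminus B$), we obtain
\begin{equation*}
  P^G[H_B>un]=\exp\{-un\,\lambda_1^B\}\cdot\bigl(1+O(n^{-1}\log^{O(1)}n)\bigr).
\end{equation*}

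The principal (and most delicate) remaining step is to show
\begin{equation*}
  n\,\lambda_1^B=\capacity(\mathbb B_D)+O(n^{-1/2}\log^{O(1)}n),\qquad\barP_{n,d}\text{-a.a.s.},
\end{equation*}
for all but $O(\sqrt n\log^{O(1)}n)$ good $x$; combined with the previous display and the Lipschitz property of $e^{-t}$ on bounded intervals, this delivers the per-vertex approximation. I would derive this via Lemma~\ref{l:AB} (together with the standard exponential approximation of hitting times): under the gap $\lambda_2^B-\lambda_1^B\ge c>0$ verified in the proof of Lemma~\ref{l:quasi}, it yields $\lambda_1^B=1/E_\pi[H_B]+O(n^{-2})$, and Proposition~3.1 of \cite{CTW11} expresses $E_\pi[H_B]$ in terms of the short-time escape probabilities $P^G_y[\tilde H_B>t_n]$ from each $y\in B$. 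Because $B(x,3)$ is a tree, the covering $\phi_x$ intertwines the walk on $\mathbb T^d$ started from $z_i$ with the walk on $G$ started from $\phi_x(z_i)$ up to time $t_n$, so Corollary~\ref{c:escape} gives $P^G_y[\tilde H_B>t_n]=e_{\mathbb B_D}(\phi_x^{-1}(y))+O(n^{-1/2})$ outside an exceptional set of size $O(\sqrt n\log^5 n)$; summing over the at most $d+1$ vertices $y\in B$ produces $\sum_{z\in\mathbb B_D}e_{\mathbb B_D}(z)=\capacity(\mathbb B_D)$ with the required error. The main obstacle lies precisely in tracking these error terms through the spectral identification so that the final additive error in $n\lambda_1^B$ respects the $n^{-1/2}\log^{O(1)}n$ scale furnished by Corollary~\ref{c:escape}.
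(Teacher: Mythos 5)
Your overall architecture coincides with the paper's: inclusion--exclusion over $D\subset[d]$, the Markov property at $t_n=\log^2 n$ combined with Lemma~\ref{l:quasi}, the exact exponential law of $H_B$ under the quasi-stationary distribution, the comparison of $t_n$-escape probabilities with the tree quantities via Lemma~\ref{l:bbE}/Corollary~\ref{c:escape}, and exceptional vertex sets of size $O(n^{1/2}\log^{O(1)}n)$. The identification of the signed combination of $\exp\{-u\capacity(\mathbb B_{D})\}$ with $d^u_i$ via \eqref{e:charQu} is also the same computation as in the paper (cf.\ \eqref{e:dui}).

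However, there is a genuine gap at exactly the point you yourself flag as ``the most delicate step'': the identification $n\lambda_1^B=\capacity(\mathbb B_D)+O(n^{-1/2}\log^{O(1)}n)$. You propose to get it from Lemma~\ref{l:AB} plus ``Proposition~3.1 of \cite{CTW11}, which expresses $E_\pi[H_B]$ in terms of short-time escape probabilities''. That proposition is used in the present paper only for the crude one-sided bound \eqref{e:crudeEHB}, $1/E[H_B]\le c/n$; its error control was designed for fixed $u\neq u_\star$ and does not operate at the $n^{-1/2}$ scale needed here (this is precisely why the paper states that the finer control of Theorem~\ref{t:Vuvol} is its main contribution). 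Note also that any route through a finite-ball capacity $\capacity_C(B)$ with $C=B(x,r)$ is blocked at this precision: replacing escape-to-$\partial C$ by escape-to-infinity on the tree costs $(d-1)^{-r}$, so one would need $r\approx\tfrac12\log_{d-1}n$, while tree-like balls around all but $O(\sqrt n)$ vertices exist only up to radius about $\tfrac14\log_{d-1}n$, capping that approach at error $n^{-1/4}$. What actually closes the gap in the paper is a separate argument you do not supply: collapse $B$ into a single state $b$ as in \eqref{e:barP}, use Kac's formula $\bar E_b[\tilde H_b]=n/|B|$ for the collapsed chain, split the return time at $t_n$, and apply Lemma~\ref{l:quasi} to the post-$t_n$ piece; this yields \eqref{e:EsigmaP},
\begin{equation*}
  \Big|\sum_{y\in B}P^G_y[\tilde H_B>t_n]-\frac{n}{E^G_\sigma[H_B]}\Big|\le \frac{c\log^2 n}{n},
\end{equation*}
which, combined with Corollary~\ref{c:escape} on good vertices, gives the capacity identification with the $n^{-1/2}$ accuracy (the paper then works directly with $n/E_\sigma[H_B]$, so your extra detour through $E_\pi[H_B]$ and $\lambda_1^B$ is unnecessary, though harmless if the Aldous--Brown comparison is quantified). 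A minor additional slip: the bound $P^G[H_B>t_n]=1-O(n^{-1}\log^2 n)$ ``uniformly in $x$'' does not follow from Lemma~\ref{l:bbE}, which is an annealed estimate; the quenched statement holds only outside an exceptional set of vertices (Corollary~\ref{c:escape}, second display, with error $n^{-1/2}$), which your exceptional-set bookkeeping can absorb but should be stated that way.
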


\begin{proof}
  We fix $x \in V_n$, $D \subset[d]$, and recall from Section~\ref{s:rw} the
  definitions of the covering map $\phi $, of $B_{x,D}$, $\mathbb B_D$ and
  of $z_i$. To simplify
  the notation we use $\sigma $ and $B$ as shorthand for
  $\sigma_{B_{x,D}}$ and $B_{x,D}$. We first estimate the probability
  that $B\subset \mathcal V^u$. Using the Markov property and
  Lemma~\ref{l:quasi}, we obtain that $\mathbb P_{n,d}$-a.a.s.
  \begin{equation}
    \begin{split}
      \label{e:BinV}
      &\Big|P^G [H_{B} > un]
      - \exp\Big\{-\frac{un}{E^G_\sigma[H_{B}]}\Big\} \Big|
      \\ &= \Big| P^G [H_{B} > {t_n}]
      E^G\big[ P^G_{X_{{t_n}}}[H_{B} > un - {t_n}]
        \big| H_{B} > {t_n}\big]
      - \exp\Big\{-\frac{un}{E^G_\sigma[H_{B}]}\Big\} \Big|
      \\ &\le \Big| P^G [H_{B} > {t_n}]
      P^G_\sigma[H_{B} > un - {t_n}]
      - \exp\Big\{-\frac{un}{E^G_\sigma[H_{B}]}\Big\} \Big|
      + c e^{-c' \log^2 n}.
    \end{split}
  \end{equation}
  Under the measure $P_\sigma$, the random variable
  $H_{B}$ is exponentially distributed, see for instance \cite{AB93}
  below (12). Hence, using that $e^{-t}$ is $1$-Lipschitz for $t\ge 0$,
  \begin{equation}
    \Big|P^G_\sigma[H_{B} > un - {t_n}]
    - \exp\Big\{-\frac{un}{ E_\sigma[H_{B}]}\Big\}\Big| \le
    \frac{t_n}{E^G_\sigma [H_{B}]}.
  \end{equation}
  Therefore, \eqref{e:BinV} becomes
  \begin{equation}
    \begin{split}
      \label{e:BinVb}
      \Big|P^G [H_{B} > un]
      - \exp\Big\{-\frac{un}{E^G_\sigma[H_{B}]}\Big\} \Big|
      \le c e^{-c' \log^2 n} + \frac{t_n}{E_\sigma[H_{B}]}
      + P [H_{B} \le {t_n}].
    \end{split}
  \end{equation}

  Let $V_\good\subset V$ be the set of vertices $x \in V$ satisfying
  \begin{equation}
    \begin{array}{c}
      \label{e:goodx}
      \text{$B(x,2)$ is a tree, and for every $D \subset [d]$
        and $i \in D$,}\\ \big|P^G_{\phi (z_i)}[\tilde{H}_{B_{x,D}} > t_n] -
      e_{\mathbb{B}}(\phi(z_i)) \big| \leq n^{-1/2} \text{ and } P^G[H_{B_{x,D}}
        \leq t_n] \leq n^{-1/2}.
    \end{array}
  \end{equation}
  By Corollary~\ref{c:escape} above, and by Remark~1.4
  and Lemma~6.1 of \cite{CTW11}, the complement of $V_\good$ is very
  small,
  \begin{equation}
    \label{e:Vgoodbig}
    |V\setminus V_\good|\le c n^{1/2}(\log n)^5, \qquad
    \mathbb P_{n,d}\text{-a.a.s.}
  \end{equation}

  By Lemma~2 of \cite{AB93} and \eqref{e:crudeEHB}
  \begin{equation}
    \label{e:invE}
    {E^G_\sigma[H_{B}]}^{-1} \leq
    {E^G[H_{B}]}^{-1} \leq cn^{-1}.
  \end{equation}
  Therefore for $x\in V_\good$, \eqref{e:BinVb} becomes
  \begin{equation}
    \label{e:PEsigma}
    \Big|P^G [H_{B} > un] -
    \exp\Big\{-\frac{un}{E^G_\sigma[H_{B}]}\Big\} \Big|
    \leq c n^{-1/2}.
  \end{equation}

  Our next step is to obtain an estimate on $E_\sigma[H_{B}]$ for
  $x\in V_\good$. To this aim we `collapse' the set
  $B_{x,D}$ into one point $b$, and define a new Markov chain whose
  distribution is denoted $\bar P$  and which is characterised by its
  transition rates $\bar p_{xy}$,
  \begin{equation}
    \label{e:barP}
    \begin{cases}
      \bar p_{ww'} = p_{ww'}, & \text{ if $w, w' \neq b$}, \\
      \bar p_{wb} = \sum_{y \in B} p_{wy}, & \text{ if $w \neq b$}, \\
      \bar p_{bw} = \frac{1}{|B|} \sum_{y \in B} p_{yw}, & \text{ if
      $w \neq b$}.
    \end{cases}
  \end{equation}
  It is easy to check that
  $\bar \pi = \frac 1n \big(|B| \delta_b + \sum_{x \neq b} \delta_x\big)$
  is a reversible distribution for this chain. Therefore,
  \begin{equation}
    \label{e:stationary}
    \begin{split}
      \frac n{|B|} = \bar{E}_b[\tilde{H}_b]
      & = \bar E_b[\tilde{H}_b, \tilde{H}_b \le t_n]
      + \bar P_b[\tilde{H}_b > t_n]
      \bar E_b\big[\bar E_{X_{t_n}}[H_b - t_n] \big| \tilde{H}_b > t_n\big].
    \end{split}
  \end{equation}
  By \eqref{e:barP},
  $\bar P_b[\tilde{H}_b > t_n] = {|B|}^{-1} \sum_{y \in B} P^G_y[\tilde{H}_{B} > t_n]$.
  Therefore, using Lemma~\ref{l:quasi} and \eqref{e:stationary},
  \begin{equation}
    \Big| \sum_{y \in B} P^G_y[\tilde{H}_{B} > t_n]
    E^G_\sigma[H_{B}] - n \Big| \leq ct_n + c\exp\{-c' t_n\} \leq c
    t_n.
  \end{equation}
  Using \eqref{e:invE}, this yields the following estimate on
  $E^G_\sigma[H_B]$,
  \begin{equation}
    \label{e:EsigmaP}
    \Big| \sum_{y \in B} P^G_y[\tilde{H}_{B} > t_n]
    - \frac{n}{E^G_\sigma[H_{B}]} \Big|
    \leq \frac{c t_n}{E^G_\sigma[H_{B}]} + c\exp\{-c' t_n\} \leq \frac{c\log^2 n}{n}.
  \end{equation}

  We are now in position to give our final estimate on $P^G[H_B\ge un]$.
  By the triangle inequality, for $x\in V_\good$,
  \begin{equation}
    \label{e:vacantB}
    \begin{split}
      \Big| P^G[&H_{B} > un]  - \exp
      \Big\{-u \sum_{y \in \mathbb{B}} e_{\mathbb{B}}(y) \Big\} \Big|
      \\&\le \Big|P^G [H_{B} > un]
      - \exp\Big\{-\frac{un}{E_\sigma[H_{B}]}\Big\} \Big|
      \\&\quad +  \Big|\exp\Big\{-\frac{un}{E_\sigma[H_x]}\Big\} -
      \exp\Big\{-u  \sum_{y \in B}
        P^G_y[\tilde{H}_{B_{x,D}} > t_n] \Big\}\Big|
      \\& \quad +  \Big|\exp\Big\{-u \sum_{y \in B_{x,D}}
        P^G_y[\tilde{H}_{B_{x,D}} > t_n] \Big\} -
      \exp\Big\{-u \sum_{y \in \mathbb{B}} e_{\mathbb{B}}(y) \Big\} \Big|
      \\& \leq c n^{-1/2},
    \end{split}
  \end{equation}
  where for the last inequality we used the estimates \eqref{e:PEsigma},
  \eqref{e:EsigmaP} and \eqref{e:goodx}.

  If $x \in V_\good$, all its neighbours are distinct.  We can then use
  the inclusion-exclusion formula to write
  \begin{equation}
    \begin{split}
      P^G \big[&\mathcal D^u(x)=i\big]
      = \sum_{C \subset [d],|C| = i}
      P^G[\mathcal V^u\cap B(x,1)=B_{x,C}]
      \\& = \sum_{C \subset [d],|C| = i}\,\,
      \sum_{D \subset [d], C \subset D}
      (-1)^{|D| - |C|} P^G \big[ H_{B_{x,D}}> un \big].
    \end{split}
  \end{equation}
  Using \eqref{e:vacantB}, we obtain that
  \begin{equation}
    \label{e:vacantC}
    \begin{split}
    \bigg| P^G \big[&\mathcal D^u(x)=i\big] -  \sum_{C \subset [d],|C| = i} \,\,
      \sum_{D \subset [d], C \subset D}
      (-1)^{|D| - |C|}
      \exp\Big\{-u\sum_{y\in \mathbb B_D}e_{\mathbb B_D}(y)\Big\} \bigg|
      \leq cn^{-1/2}.
      \end{split}
  \end{equation}
  From \eqref{e:eB},\eqref{e:escape}, it is not difficult to deduce that
  for $y\in B_{x,D}\setminus \{x\}$,
  \begin{equation}
    e_{\mathbb B_D}(y)=\frac{d-1}{d}\cdot\frac{d-2}{d-1} \qquad
    \text{and}\qquad
    e_{\mathbb B_D}(x)=\frac{d-|D|}{d}\cdot \frac{d-2}{d-1}.
  \end{equation}
  Inserting this into \eqref{e:vacantC} leads to
  \begin{equation}
  \begin{split}
    \bigg| P^G \big[&\mathcal D^u(x)=i\big] -   \binom  d i \sum_{j=1}^d (-1)^{j-i}\binom {d-i}{j-i}
    \exp\Big\{- u \frac{d-2}{d-1}
      \Big(j \frac{d-1}{d} + \frac{d-j}{d}\Big)\Big\} \bigg|
       \leq c n^{-1/2}.
      \end{split}
  \end{equation}
  A simple computation implies that the leading term in the last formula
  equals $d^u_i$ (see \eqref{e:dui}). Therefore, $\barP_{n,d}$-a.a.s.,
  uniformly for $x\in V_\good$,
  \begin{equation}
   \big| P^G \big[\mathcal D^u(x)=i\big]
    -d^n_i\big|\le cn^{-1/2}.
  \end{equation}
  The claim of Theorem~\ref{t:Vuvol} then follows by
  summing this relation over $x\in V_\good$ and using \eqref{e:Vgoodbig}.
\end{proof}

We now prove the concentration of $n_i(\mathcal D^u)$ around its mean.
\begin{theorem}
  \label{t:niconc}
  Let $G$ be a $d$-regular {(multi)}graph on $n$ vertices satisfying
  $\lambda_G\ge\alpha >0$. Then, for every $\varepsilon \in (0,\tfrac
    14)$, and for every $i\in\{0,\dots,d\}$,
  \begin{equation}
    P^G\big[|n_i(\mathcal D^u)-E^G[n_i(\mathcal D^u)]|\ge
      n^{1/2+\varepsilon }\big]
    \le c_\varepsilon  e^{-cn^{\varepsilon }}.
  \end{equation}
\end{theorem}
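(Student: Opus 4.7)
This is a concentration inequality for a functional of the continuous-time random walk, so the natural tool is Azuma--Hoeffding applied to a Doob martingale, with martingale differences controlled via a coupling argument that exploits the spectral-gap hypothesis $\lambda_G\ge\alpha$.

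Set $L=\lceil un\rceil$, let $\mathcal F_j=\sigma(X_s:s\le j)$ for $j=0,\dots,L$, and introduce the Doob martingale $M_j:=E^G[n_i(\mathcal D^u)\mid \mathcal F_j]$. Then $M_0=E^G[n_i(\mathcal D^u)]$ and $M_L=n_i(\mathcal D^u)$ (a possibly fractional last interval is absorbed into the final step), so it suffices to bound the increments. By the Markov property at time $j$, $|M_j-M_{j-1}|\le \max_{y,y'\in V}|h_j(y)-h_j(y')|$, where $h_j(y):=E^G[n_i(\mathcal D^u)\mid\mathcal F_{j-1},X_j=y]$.

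The key step is the following coupling. Fix $y,y'\in V$ and a realisation of the common past $(X_s)_{s\le j-1}$. Couple two continuous-time walks $W,W'$ with $W_j=y,W'_j=y'$ by letting them evolve independently on $[j,j+T]$, where $T:=\lceil 3\alpha^{-1}\log n\rceil$; by \eqref{e:I}, $\|P^G_z[X_T\in\cdot]-\pi\|_{\rm TV}\le e^{-\alpha T}\le n^{-3}$ for either starting point, so the triangle inequality and a maximal coupling force $W_{j+T}=W'_{j+T}$ except on an event of probability at most $2n^{-3}$; after $j+T$, continue jointly so that the walks coincide. On the event that the merging succeeds and both walks make at most $2T$ jumps on $[j,j+T]$ (complement of probability at most $n^{-2}+e^{-cT}$), $W$ and $W'$ agree for all $s\ge j+T$ and together visit at most $4T$ distinct vertices on $[j,j+T]$. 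Flipping the vacancy status of a single vertex alters at most $d+1$ entries of the degree sequence and hence changes $n_i(\mathcal D^u)$ by at most $d+1$; on the exceptional events one uses the trivial bound $n_i\le n$. Therefore
\[|h_j(y)-h_j(y')|\le (d+1)\cdot 4T+n\cdot(n^{-2}+e^{-cT})\le C_{d,\alpha}\log n,\]
and so $|M_j-M_{j-1}|\le C_{d,\alpha}\log n$ almost surely.

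Azuma--Hoeffding then yields
\[P^G\bigl[|n_i(\mathcal D^u)-E^G[n_i(\mathcal D^u)]|\ge t\bigr]\le 2\exp\!\Bigl(-\frac{t^2}{2L\,C_{d,\alpha}^2\log^2 n}\Bigr)\le 2\exp\!\Bigl(-\frac{c_1t^2}{n\log^2 n}\Bigr),\]
and with $t=n^{1/2+\varepsilon}$ the exponent becomes $-c_1 n^{2\varepsilon}/\log^2 n\le-c_2n^{\varepsilon}$ for $n$ large (any $\varepsilon>0$ in fact works). The main technical obstacle is the coupling step: one must quantitatively produce a coupling that merges by time $O(\log n)$ with failure probability $\le n^{-2}$, and absorb the rare bad events (failed merging, atypical Poisson jump counts) into the deterministic $O(\log n)$ bound required by Azuma. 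This is the only place where the spectral-gap hypothesis $\lambda_G\ge\alpha$ is used, through \eqref{e:I}.
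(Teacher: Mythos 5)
There is a genuine gap in the martingale setup. You take $\mathcal F_j=\sigma(X_s:s\le j)$ and write the Doob martingale $M_j=E^G[n_i(\mathcal D^u)\mid\mathcal F_j]$, but then bound the increment by $\max_{y,y'}|h_j(y)-h_j(y')|$ with $h_j(y)=E^G[n_i(\mathcal D^u)\mid\mathcal F_{j-1},X_j=y]$. That bound is valid only for the coarser filtration $\mathcal F_{j-1}\vee\sigma(X_j)$, and you cannot use that filtration, since then $M_L\neq n_i(\mathcal D^u)$: the degree sequence is a function of the entire continuous-time range, not of the snapshots $X_0,\dots,X_L$. With the full filtration, $M_j$ depends on the whole segment $(X_s)_{j-1<s\le j}$, whose range has Poisson-distributed size and is therefore unbounded; each visited vertex can shift $n_i$ by up to $d+1$. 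Hence the martingale increments are not almost surely bounded and Azuma--Hoeffding does not apply as stated. Relatedly, your coupling starts at time $j$: the two conditional laws $E^G[\cdot\mid\mathcal F_{j-1},X_j=y]$ and $E^G[\cdot\mid\mathcal F_{j-1},X_j=y']$ involve different random walk \emph{bridges} on $(j-1,j]$, and the discrepancy between these bridges is left uncontrolled.

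The paper avoids precisely this unboundedness. It replaces $P^G$ by the auxiliary measure $\mathcal P^G$ under which the trajectory is a concatenation of independent bridges $Y^i$ of length $\ell=n^\varepsilon$ with i.i.d.\ uniform endpoints (Lemma~\ref{l:equiv} controls the Radon--Nikodym derivative), and then \emph{truncates} each $Y^i$ to its first $2\ell$ visited vertices $\bar Y^i$, estimating the rare truncation error separately via \eqref{e:numjumps}. The truncated blocks have deterministically bounded ranges, so McDiarmid's bounded-differences inequality (Lemma~\ref{l:concentration}) applies with $R^2=cn^{1+\varepsilon}$, yielding $e^{-cn^\varepsilon}$. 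Your scheme can in principle be rescued by an analogous truncation of the number of jumps per unit interval and by coupling the intermediate bridges as well, but these are exactly the missing steps; once added, the argument becomes the truncation-plus-bounded-differences proof the paper gives, merely with a different block length.
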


To prove this lemma we use the following concentration theorem for (not
  necessarily independent) random variables that we learnt from
\cite{McD98}. Consider a sequence $W=(W_1,\dots,W_M)$ of random
variables, all taking values in some space $\mathcal A$. Let
$f:\mathcal A^M\to \mathbb R$ be a bounded function. For
$k\in \{1,\dots,M\}$ and $y_1,\dots,y_{k-1}\in \mathcal A^{k-1}$ we define
\begin{equation}
  \begin{split}
    r_k&(y_1,\dots,y_{k-1})
    \\&=\sup_{y,y'\in \mathcal A}
    \big|\mathbb E[f(W)|W_k=y,W_i=y_i\forall i<k]-
    \mathbb E[f(W)|W_k=y',W_i=y_i\forall i<k]\big|
  \end{split}
\end{equation}
and set
\begin{equation}
  R^2=\sup\Big\{ \sum_{k=1}^M r_k^2(y_1,\dots,y_{k-1}):y_1,\dots,y_{M-1}\in
    \mathcal A\Big\}.
\end{equation}

\begin{lemma}[Theorem 3.7 of \cite{McD98}]
  \label{l:concentration}
  Let $W=(W_1,\dots,W_M)$ be as above. Then
  \begin{equation}
    \mathbb P[|f(W)-\mathbb Ef(W)|\ge t]\le 2e^{-2t^2/R^2}.
  \end{equation}
\end{lemma}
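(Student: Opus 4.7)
The plan is to apply the bounded-differences inequality (Lemma~\ref{l:concentration}) to $n_i(\mathcal D^u)$ written as a function $f(W_1,\dots,W_M)$ of consecutive pieces $W_k=(X_s)_{s\in[(k-1)L,kL]}$ of the trajectory, with piece length $L=C\log n$ (for a sufficiently large $C=C(\alpha)$) and $M=\lceil un/L\rceil$ pieces. The length $L$ is chosen so that, on one hand, the spectral-gap bound \eqref{e:I} makes the endpoint of the $k$-th piece nearly irrelevant for subsequent pieces, and on the other hand $R^2\lesssim M r_k^2$ remains of order $n\cdot\mathrm{polylog}(n)$.

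To estimate $r_k$, I fix $y_1,\dots,y_{k-1}$ and two candidate trajectories $y,y'$ for $W_k$, with endpoints $z$ and $z'$. Decompose the set $A^u$ of vertices visited on $[0,un]$ as $A_{\mathrm{past}}\cup A_{\mathrm{future}}$, where $A_{\mathrm{past}}$ is determined by $(y_1,\dots,y_{k-1},W_k)$ and $A_{\mathrm{future}}$ is the range of a walk started at the endpoint of $W_k$. Since flipping one vertex in or out of $\mathcal V^u$ changes $n_i(\mathcal D^u)$ by at most $d+1$ (the vertex itself plus its $d$ neighbours),
\[
\big|E^G[f\mid W_k{=}y,\ldots]-E^G[f\mid W_k{=}y',\ldots]\big|
\le (d+1)\Big(|A_{\mathrm{past}}(y)\triangle A_{\mathrm{past}}(y')|+E^G[|A_{\mathrm{future}}^{z}\triangle A_{\mathrm{future}}^{z'}|]\Big).
\]
The first term is bounded by the number of distinct vertices in the $k$-th piece. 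For the second, I couple the two futures maximally: by \eqref{e:I} their distributions at time $L$ are within TV distance $n e^{-\alpha L}\le n^{-2}$, so the coupling merges them by time $L$ except on an event of probability $\le n^{-2}$; on success each walk visits at most $O(L)$ vertices before coupling, and on failure the symmetric difference is trivially $\le n$, giving expected contribution $O(L)+n\cdot n^{-2}=O(L)$.

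Because Lemma~\ref{l:concentration} requires a deterministic $r_k$, I will truncate each $W_k$ to the event $\{N_L^{(k)}\le 2L\}$; a Poisson tail bound combined with a union bound shows that this event holds for every $k$ with probability $\ge 1-n^{-10}$ (for $C$ large enough), outside of which I use the trivial bound $\|n_i\|_\infty\le n$, while inside $|y|,|y'|\le 2L+1$ so the preceding analysis yields $r_k\le c_d L$. Consequently $R^2\le M(c_dL)^2=c'unL=O(n\log n)$, and Lemma~\ref{l:concentration} with $t=n^{1/2+\varepsilon}$ gives
\[
P^G\bigl[|n_i(\mathcal D^u)-E^G[n_i(\mathcal D^u)]|\ge n^{1/2+\varepsilon}\bigr]
\le 2\exp\!\Big(-\frac{c\,n^{2\varepsilon}}{\log n}\Big)\le c_\varepsilon\, e^{-c n^{\varepsilon}}
\]
for $n$ large. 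The main technical obstacle is converting the TV-distance control of future walks into a workable $O(L)$ contribution to $r_k$; this is where the spectral-gap hypothesis $\lambda_G\ge\alpha$ enters decisively, combined with the crude bound $\|n_i\|_\infty\le n$ that absorbs the rare coupling-failure event.
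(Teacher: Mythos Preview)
First, Lemma~\ref{l:concentration} is not proved in the paper at all; it is quoted verbatim as Theorem~3.7 of \cite{McD98}. What you have written is not a proof of this lemma but a proof sketch of Theorem~\ref{t:niconc}, which \emph{applies} Lemma~\ref{l:concentration}. I compare your sketch to the paper's proof of Theorem~\ref{t:niconc}.

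The overall shape agrees: cut the trajectory into $M$ pieces, view $n_i(\mathcal D^u)$ as $f(W_1,\dots,W_M)$, bound $r_k$, and plug into Lemma~\ref{l:concentration}. The routes differ in how the dependence between pieces is neutralised. The paper replaces $P^G$ by an auxiliary measure $\mathcal P^G$ built from i.i.d.\ uniform endpoints $Z^i$ and random-walk bridges (Lemma~\ref{l:equiv}); under $\mathcal P^G$ the truncated pieces $\bar Y^i$ are $1$-dependent, so changing $\bar Y^k$ affects only $\bar Y^k$ and $\bar Y^{k+1}$ and the bound $r_k\le 2(d+1)\ell$ is immediate. You instead stay under $P^G$ and handle the dependence of the future on the endpoint of piece~$k$ by a spectral-gap coupling via~\eqref{e:I}. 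This is a legitimate alternative and sidesteps the equivalence-of-measures Lemma~\ref{l:equiv}; the paper's route buys a one-line $r_k$ bound at the cost of that lemma. Your choice $L=C\log n$ (versus the paper's $\ell=n^\varepsilon$) also gives a marginally smaller $R^2$, though both suffice.

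There is, however, one genuine gap. Lemma~\ref{l:concentration} requires $r_k(y_1,\dots,y_{k-1})\le c_k$ for \emph{every} $y,y'\in\mathcal A$, and with $\mathcal A=D([0,L],V)$ a trajectory can have arbitrarily many jumps, so the past term $|A_{\mathrm{past}}(y)\triangle A_{\mathrm{past}}(y')|$ is unbounded and $r_k=\infty$. Your remedy --- split on $\mathcal E=\bigcap_k\{N_L^{(k)}\le 2L\}$ and absorb $\mathcal E^c$ via $\|n_i\|_\infty\le n$ --- controls $P[\mathcal E^c]$ but does not produce a deterministic bound on $r_k$; Lemma~\ref{l:concentration} cannot be applied to $f$ ``restricted to an event''. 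The correct fix (exactly what the paper does with $\bar Y^i$ and $\bar{\mathcal D}$) is to build the truncation into the variables \emph{before} invoking the lemma: let $\bar W_k$ be the set of the first $2L$ vertices visited in piece~$k$, set $\bar f=n_i(\bar{\mathcal D})$ with $\bar{\mathcal V}=V\setminus\bigcup_k\bar W_k$, apply Lemma~\ref{l:concentration} to $\bar f(\bar W_1,\dots,\bar W_M)$, and only afterwards transfer back to $n_i(\mathcal D^u)$ using $P[\mathcal E^c]$ small. With that adjustment your coupling argument does yield $r_k=O(L)$ and the remainder of your sketch goes through.
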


\begin{proof}[Proof of Theorem~\ref{t:niconc}]
  To apply Lemma~\ref{l:concentration}, we need the following
  construction similar to Section~4 of \cite{CTW11}. Let
  $\ell=n^\varepsilon  $ for $\varepsilon $ from the statement of
  Theorem~\ref{t:niconc}. On an auxiliary probability space $(\Omega ,Q)$,
  define $(Z^i,i\in \mathbb N)$ to be a collection of i.i.d.~uniformly
  chosen vertices of $G$. Given the collection $(Z_i)$, let $(Y_i:i\ge 1)$
  be (conditionally) independent family of elements of $D([0,\ell],G)$
  such that $Y^i$ is distributed according to the random walk bridge
  $P^{G,\ell}_{Z^{i-1},Z^{i}}$ {(see Section~\ref{ss:rwnot} for the
      definition)}. We define $\mathcal X\in D([0,\infty),G)$ to be the
  concatenation of $Y^i$'s,
  \begin{equation}
    \mathcal X(t)=Y^i(t-(i-1)\ell), \qquad \text{when }
    (i-1)\ell \le t  < i\ell.
  \end{equation}
  We use $\mathcal P^G$ to denote the distribution of $\mathcal X$ on
  $D([0,\infty),G)$, $\mathcal P^G= Q\circ \mathcal X^{-1}$.
  $\mathcal P^{G,T}$ stands for its restriction to $D([0,T],G)$. The
  measure $\mathcal P^{G,un}$ approximates well $P^{G,un}$ if $\ell$ is
  large enough as follows from the next lemma whose proof is postponed to
  the end of this section.
  \begin{lemma}
    \label{l:equiv}
    Assume that  $\lambda_G>\alpha $ and $\ell = n^\varepsilon$. Then there exist constant
    $c_{\alpha ,\varepsilon  }$ and $c'_{\alpha ,\varepsilon  }$ such that for every
    $u> 0$ and all $n$ satisfying $ne^{-\ell \alpha }<1/2$,
    $\mathcal P^{G,un}$ and $P^{G,un}$ are equivalent and
    \begin{equation}
      \Big|\frac {\d \mathcal P^{G,nu}}{\d P^{G,nu}}-1\Big|
      \le c'_{\alpha,\epsilon} u e^{-c_{\alpha,\epsilon}\ell }.
    \end{equation}
  \end{lemma}

  To be able to apply Lemma~\ref{l:concentration}, more precisely to
  estimate the functions $r_k$, we do not want  $|\Ran Y^i|$ to be too
  large. Therefore, we define $\bar Y^i\subset V$ to be the set of first
  $2\ell$ vertices visited by $Y^i$,
  \begin{equation}
    \bar Y^i=\{Y^i_t,t\le (\tau_{2\ell}(Y^i) \wedge \ell)\},
  \end{equation}
  where $\tau_{k}(Y^i)$ denotes the time of $k$-th step $Y^i$ (defined to
    be infinite if $Y^i$ makes less than $k$ steps). Obviously
  $\bar Y^i \subset \Ran Y^i$. On the other hand, it can be proved as in
  Lemma~4.2 of \cite{CTW11}, that
  \begin{equation}
    \label{e:numjumps}
    Q(\Ran Y^i\neq \bar Y^i)
    \le \sup_{x,y\in V}P^{G,\ell}_{xy}[N_\ell\ge 2\ell]
    \le c e^{-c'\ell}.
  \end{equation}
  (Remark that $Y^i$ has the law of the random walk bridge, and thus
    \eqref{e:numjumps} is not just a direct consequence of large
    deviation estimate for a Poisson random variable.)

  We may now prove Theorem~\ref{t:niconc}. Set $m=\lfloor un/\ell\rfloor$ and
  $u'= m \ell /n$. Let $N$  be the number of steps of $X$ between $u'n$
  and $un$. Since $un-u'n\le \ell$, by properties of Poisson random
  variables, $P^G[N\ge 2 \ell]\le e^{-c \ell}$. Between, $u'n$ and $un$
  the walk visits at most $N$ sites, therefore
  \begin{equation}
    |n_i(\mathcal D^u)-n_i(\mathcal D^{u'})|\le (d+1) N,
  \end{equation}
  and
  \begin{equation}
    |E^G[n_i(\mathcal D^u)]-E^G[n_i(\mathcal D^{u'})]|\le (d+1) E^G[N]\le
    (d+1)\ell.
  \end{equation}
  Thus, for $n \geq c_{\alpha,\epsilon}$,
  \begin{equation}
\label{e:niEni}
    \begin{split}
      P^G&\big[|n_i(\mathcal D^u)-E^G[n_i(\mathcal D^u)]|\ge
        n^{1/2+\varepsilon }\big]
      \\&\le
      P^G\big[|n_i(\mathcal D^{u'})-E^G[n_i(\mathcal D^{u'})]|\ge
        n^{1/2+\varepsilon }-3(d+1)\ell \big] + P^G[N\ge 2\ell].
      \\&\le
      P^G\big[|n_i(\mathcal D^{u'})-E^G[n_i(\mathcal D^{u'})]|\ge
        \tfrac 12n^{1/2+\varepsilon } \big] + e^{-c \ell}.
    \end{split}
  \end{equation}
  Using Lemma~\ref{l:equiv}, denoting by $\mathcal E^G$ the expectation
  corresponding to $\mathcal P^G$,
  \begin{equation}
    \big|E^G[n_i(\mathcal D^{u'})]-\mathcal E^G[n_i(\mathcal D^{u'})]\big|\le
     c_{\alpha,\epsilon}nu' e^{-c'_{\alpha,\epsilon}\ell},
  \end{equation}
  and thus, for $n \geq c_{\alpha,\epsilon}$,
  \begin{equation}
\label{e:niprimeE}
    \begin{split}
      P^G\big[&|n_i(\mathcal D^{u'})-E^G[n_i(\mathcal D^{u'})]| \geq
        \tfrac 12n^{1/2+\varepsilon } \big]
      \\&\le
      \mathcal P^G\big[|n_i(\mathcal D^{u'})-\mathcal E^G[n_i(\mathcal D^{u'})]|\ge
        \tfrac 14n^{1/2+\varepsilon } \big]+ c_{\alpha,\epsilon} u' e^{-c'_{\alpha,\epsilon} \ell}.
    \end{split}
  \end{equation}

  Observe that under $\mathcal P^G$,
  $\mathcal V^{u'}=V\setminus \cup_{i\le m} \Ran Y^i$. Let
  $\bar{\mathcal  V}$ be the vacant set left by $\bar Y^i$'s,
  $\bar{\mathcal V}=V\setminus \cup_{i\le m} \bar Y^i$, and denote by
  $\bar{\mathcal D}$ the degree sequence of the graph with set of
  vertices $V$ and edge set
  $\{\{x,y\}\in \mathcal E:x,y\in \bar{\mathcal V}\}$,
  cf.~\eqref{e:vacantgraph}. By \eqref{e:numjumps}, we have then
  \begin{equation}
    Q[\mathcal D^{u'}\neq \bar {\mathcal D}]\le c m
    e^{-c'\ell}\le c e^{-c''\ell}.
  \end{equation}
  Therefore, for $n \geq c_{\alpha}$,
  \begin{equation}
    \big|\mathcal E^G[n_i(\mathcal D^{u'})]-Q[n_i(\bar{\mathcal
          D})]\big|
    \le n Q[\mathcal D^{u'}\neq \bar {\mathcal D}]
    \le c e^{-c'\ell},
  \end{equation}
  and thus
  \begin{equation}
\label{e:nipEcal}
    \begin{split}
      \mathcal P^G&\big[|n_i(\mathcal D^{u'})-\mathcal E^G[n_i(\mathcal D^{u'})]|\ge
        \tfrac 1 4n^{1/2+\varepsilon } \big]
      \\&\le
      Q\big[|n_i(\bar{\mathcal D})-Q[n_i(\bar{\mathcal D})]|\ge
        \tfrac 18 n^{1/2+\varepsilon } \big] + c e^{-c'\ell}.
    \end{split}
  \end{equation}

  We now apply Lemma~\ref{l:concentration} with $M=m$, $W_i=\bar Y^i$,
  $f=n_i(\bar {\mathcal D})$, and $\mathcal A$ being the set of subsets
  of $V$ with at most $2\ell$ elements.  Writing $\boldsymbol y_k=(y_1,\dots,y_k)$,
  $\boldsymbol y_k'=(y_1,\dots,y_{k-1},y'_k)$, and
  $\boldsymbol Y_k=(\bar Y_1,\dots,\bar Y_k)$, we claim that
  \begin{equation}
    \label{e:yuoi}
    r_k(\boldsymbol y_{k-1})=\sup_{y,y'\in \mathcal A}
    |Q[n_i(\bar {\mathcal D})|\boldsymbol Y_k=\boldsymbol y_k]
    -Q[n_i(\bar {\mathcal D})|\boldsymbol Y_k=\boldsymbol y'_k]|\le 2
    (d+1)\ell.
  \end{equation}
  Indeed, by conditioning also on the values of $\bar Y^{k+2},\dots,\bar Y^m$, we
  observe that the difference
  \begin{equation}
    |Q[n_i(\bar {\mathcal D})|\boldsymbol Y_k=\boldsymbol
      y_k,\bar Y^{k+2},\dots,\bar Y^m]
    -Q[n_i(\bar {\mathcal D})|\boldsymbol Y_k=\boldsymbol
      y'_k,\bar Y^{k+2},\dots,\bar Y^m]|
  \end{equation}
  cannot be larger than $(d+1)(|\bar Y^k|+|\bar Y^{k+1}|)\le 2(d+1)\ell$.
  The inequality \eqref{e:yuoi} then follows by integrating over
  $\bar Y^{k+2},\dots,\bar Y^m$.

  From \eqref{e:yuoi} it follows that we can apply
  Lemma~\ref{l:concentration} with
  $R^2=m (d+1)^2 \ell^2 = c n^{1+\varepsilon }$, yielding
  \begin{equation}
    Q\big[|n_i(\bar{\mathcal D})-Q[n_i(\bar{\mathcal D})]|\ge
      \tfrac 18n^{1/2+\varepsilon } \big]\le c e^{-c
      n^{1+2\varepsilon}/n^{1+\varepsilon  }} \le c e^{-c n^\varepsilon }.
  \end{equation}
  This, together with \eqref{e:niEni}, \eqref{e:niprimeE} and \eqref{e:nipEcal}
 completes the proof of Theorem~\ref{t:niconc}.
\end{proof}

\begin{proof}[Proof of Lemma~\ref{l:equiv}]
  Let $u'$ be the smallest number greater or equal to $u$, such that $u'n$
  is an integer multiple of $\ell$, and set $m=u'n/\ell$. Let further $A$
  be an arbitrary $\mathcal F_{un}$-measurable subset of $D([0,u'n],V)$.
  Since $P^{G,un}$ and $\mathcal P^{G,un}$ are the restrictions of the
  measures $P^{G,u'n}$ and $\mathcal P^{G,u'n}$   to $D([0,un],V)$, it is
  sufficient to prove the lemma with $u$ replaced by $u'$. To this end we
  write
  \begin{equation}
    \label{e:PP0}
    P^{G,u'n}[A]=\sum_{x_0,\dots,x_{m}\in V}
    P^{G,u'n}[A|X_{i\ell}=x_i,0\le i\le m]
    P^{G,u'n}[X_{i\ell}=x_i,0\le i\le m].
  \end{equation}
  By the Markov property
  \begin{equation}
    \label{e:PP}
    P^{G,u'n}[X_{i\ell}=x_i,0\le i\le m] =
    \pi(x_0)\prod_{k=0}^{m-1}
    P^\ell_{x_{k}} [X_\ell =x_{k+1}].
  \end{equation}
  The construction of the measure $\mathcal P^{G,u'n}$ implies that
  \begin{equation}
    \begin{split}
      \mathcal P^{G,u'n}[A|X_{i\ell}=x_i,0\le i\le m]&=
      P^{G,u'n}[A|X_{i\ell}=x_i,0\le i\le m],\\
      \label{e:QQ}
      \mathcal P^{G,u'n}[X_{i\ell}=x_i,0\le i\le 2m] &=
      \prod_{k=0}^{m} \pi (x_{k}).
    \end{split}
  \end{equation}
  Comparing \eqref{e:PP} and \eqref{e:QQ}, it remains to control the ratio
  $P^\ell_x[X_\ell=y]/\pi (y)$. However, by \eqref{e:I} and the assumption
  of the lemma,  $|P^{G,\ell}_x[X_\ell=y]/\pi (y)-1|\le n e^{-\alpha  \ell}$.
  This leads to
  \begin{equation}
    (1-ne^{\alpha \ell})^m\le \frac{\mathcal P^{G,u'n}[A]}{P^{G,u'n}[A]}\le (1+n e^{-\alpha \ell})^m
  \end{equation}
  Since $\ell = n^\varepsilon $ and $ne^{-\ell \alpha }\le \frac 12$ by
  the assumptions of the lemma, it immediately follows that $P^{G,u'n}$
  and $\mathcal P^{G,u'n}$ are equivalent. A change of constants
  accommodating the terms polynomial in $n$ then completes the proof.
\end{proof}

\section{Proofs of Theorems~\ref{t:criticalwindow} and \ref{t:outofwindow}}
\label{s:proofs}
We now have all tools that we need to show all the main results of this
paper. As a direct consequence of  Theorems~\ref{t:Vuvol},
\ref{t:niconc} and the fact \eqref{e:gap}, we get
that $\bbP_{n,d}$-a.a.s.
\begin{equation}
  \label{e:nconv}
  |n_i(\mathcal D^u)-n d_i^u|\le cn^{1/2}\log^5 n ,
  \qquad \text{for all $0\le i \le d$,}
\end{equation}
where $d_i^u$ is defined in \eqref{e:dui}.
Hence, $\bbP_{n,d}$-a.a.s,
$\lim_{n\to\infty }n^{-1}n_i(\mathcal D^u) = d_i^u$. The constant
$Q(\mathcal D^u)$ (see \eqref{e:Q}) can be written as
\begin{equation}
  Q(\mathcal D^u)=
  \frac{\sum_{x=1}^n \mathcal D^u(x)^2}
  {\sum_{x=1}^n \mathcal D^u(x)}-2
  =\frac{\sum_{i=1}^d i^2 n_i(\mathcal D^u)}
  {\sum_{i=1}^d i n_i(\mathcal D^u)} -2.
\end{equation}
Thus, for $n > c_u$, on the event in \eqref{e:nconv} we have
\begin{equation}
  \label{e:Qest}
  \Big| Q(\mathcal D^u) - \big(p_u(d-1) -1\big) \Big| \leq c_u n^{-1/2}\log^5 n.
\end{equation}

The value $u_\star$ given by \eqref{e:ustar} satisfies
$p_{u_\star}(d-1)-1=0$. Therefore, when $u_n\to u_\star$, we obtain by
expanding the exponential in the definition \eqref{e:pu} of $p_u$ around
$u_\star$,
\begin{equation}
  \Big|Q(\mathcal D^{u_n}) - (u_\star - u_n) \frac{(d-2)^2}{d(d-1)} \Big|
  \leq c \big((u_\star -u_n)^2 + n^{-1/2}\log^5 n\big).
\end{equation}

This implies that when $u_n$ is in the critical window of
Theorem~\ref{t:criticalwindow}, that is
$|n^{1/3}(u_\star - u_n)|\le \lambda $, then $Q(\mathcal D^{u_n})$ is in
the critical window of Theorem~\ref{t:rgds}, that is
$n^{1/3}|Q(\mathcal D^{u_n})|\le \lambda '$, $\bbP_{n,d}$-a.a.s.
Theorem~\ref{t:criticalwindow} then follows directly from
Theorem~\ref{t:rgds}(i) together with Proposition~\ref{p:CF} and the
remark following \eqref{e:simple}.

Very similar reasoning apply when proving Theorems~\ref{t:outofwindow}
and~\ref{t:supercritical}. We should only identify the constants of
Theorem~\ref{t:rgds}. Easy computations give
\begin{equation}
  \lambda = e^{-u\frac{d-2}{d-1}} d p_u, \qquad
  \beta  = e^{-u\frac{d-2}{d-1}} d(d-1)(d-2) p_u^3,
\end{equation}
and thus
\begin{equation}
  v_n=2n \lambda^2 \beta^{-1 }Q(\mathcal D^{u_n})=2n (u_\star - u_n) \frac{d-2}{(d-1)^2} e^{-u_\star
    \frac{d-2}{d-1}}(1+o(1)).
\end{equation}
Replacing $u_\star - u_n$ by $\omega_n n^{-1/3}$,
Theorem~\ref{t:outofwindow}(a) follows. It can also be seen that
$\sqrt {n/Q(\mathcal D^{u_n})}$ is of order $n^{2/3}\omega_n^{-1/2}$,
implying Theorem~\ref{t:outofwindow}(b).

Finally to identify $\rho $ of Theorem~\ref{t:supercritical}. We observe
that $g(x)$ of Theorem~\ref{t:rgds} is given by
\begin{equation}
  g(x)=\sum_{i=0}^d d^u_ix^i=e^{-u\frac{d-2}{d-1}}(x p_u + (1-p_u))^d.
\end{equation}
After few simplifications, $\xi $ of Theorem~\ref{t:rgds} is the unique
solution in $(0,1)$ of the equation
\begin{equation}
  (x p_u + (1-p_u))^{d-1} = x,
\end{equation}
and $\rho $ is given by
\begin{equation}
  \label{e:rhodef}
  \rho = 1-g(\xi ).
\end{equation}
This completes the proofs of all three main theorems.

\medskip

\begin{remark}
  (1) Theorem~\ref{t:Vuvol} raises the question of what is the right
  magnitude of deviations in $E^G[n_i(\mathcal{D}^u)]$ under the law
  $\bar{\mathbb{P}}_{n,d}$? If indeed it is of order $n^{1/2}$ (without
    power-log corrections), then it would be interesting to investigate
  whether this quantity satisfies a central limit theorem when properly
  rescaled.

  (2) As established in Proposition~\ref{p:CF} and Theorems~\ref{t:Vuvol}
  and \ref{t:niconc}, we can reduce the study of $\mathcal{V}^u$ to
  questions on the behaviour of random graphs with prescribed degree
  sequences. Although the results in \cite{MR95,JL09,HM10} provide very
  fine information about such graphs, there are several questions
  concerning them which are still open. For instance, one could give a
  better description of the geometry of their critical components, their
  diameters, spectral gaps, etc.

  (3) It is interesting to notice that the statements
  \eqref{e:supercritical} and \eqref{e:subcritical} were established in
  \cite{CTW11} for the vacant set left by random walk on other sequences
  of graphs, such as large girth expanders. Is it possible to extend the
  results of the current paper on the critical behaviour of $\mathcal{V}^u$
  to this more general setting?
\end{remark}


\appendix
\section{Properties of the quasi-stationary distribution}

We establish here few results for arbitrary reversible irreducible
continuous-time Markov chains on a finite state space. These results are
natural but we have not found any suitable formulation in the literature.

Let $V$ be a finite set and let $\mathcal L$ be  the generator of a
reversible irreducible continuous-time Markov chain $X$ on $V$, and let
$\pi(x)$ be its invariant measure. We use $\<f,g\>$ to denote the usual
scalar product on $L^2(V,\pi )$, $\<f,g\>=\sum_{x\in V} f(x)g(x)\pi (x)$.
The operator $-\mathcal L$ is symmetric in $L^2(V,\pi )$ and has real
eigenvalues $0=\lambda_1<\lambda_2\le \dots \le \lambda_{|V|}$ and
corresponding orthonormal eigenvectors $v_1$, \dots,  $v_{|V|}$.

For $B\subset V$, we use $\mathcal L^B$ to denote the generator of the
Markov chain $X$ killed on hitting $B$, viewed as an operator on
$L^2(V\setminus B, \pi|_{V\setminus B} )$. Let
$0<\lambda^B_1<\lambda^B_2\le \dots\dots \lambda^B_{|V\setminus B|}$, and
$v_1^B$, \dots,  $v_{|V\setminus B|}^B$ denote the eigenvalues and
eigenvectors of $-\mathcal L^B$.

The \emph{quasi-stationary distribution} $\sigma_B$ is related to the
eigenvector of $-\mathcal L^B$ corresponding to $\lambda^B_1$ and is given
by
\begin{equation}
  \label{e:sigmavone}
  \sigma_B(y)=\frac{v^B_1(y)\pi (y)}{\<v_1^B,\boldsymbol 1\>}=
  \frac{\<v_1^B,\delta_y\>}{\<v_1^B,\boldsymbol 1\>},
\end{equation}
with $\boldsymbol 1$ denoting the constant one function. Inverting this
relation we get
\begin{equation}
  \label{e:vonesigma}
  v^B_1(x)=\frac{\sigma_B(x)}{\pi (x)} \Big(\sum_{x\in V\setminus B}
    \frac{\sigma_B (x)^2}{\pi (x)}\Big)^{-1/2}.
\end{equation}

\begin{lemma}
  \label{l:AB} For every $B\subset V$,
  \begin{equation}
    \label{e:AB2}
    \lambda^B_2 - \lambda^B_1 \geq \lambda_2 - \frac{1}{E[H_B]}.
  \end{equation}
\end{lemma}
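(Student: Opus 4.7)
The plan is to split \eqref{e:AB2} into two independent inequalities, $\lambda_2^B \ge \lambda_2$ and $\lambda_1^B \le 1/E[H_B]$, which chain immediately to give $\lambda_2^B - \lambda_1^B \ge \lambda_2 - \lambda_1^B \ge \lambda_2 - 1/E[H_B]$.

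For the first inequality I would use a Courant--Fischer / min--max argument. Identifying $V_B := \{f \in L^2(V,\pi) : f|_B \equiv 0\}$ with $L^2(V\setminus B,\pi|_{V\setminus B})$, and using that for $f,g \in V_B$ the boundary contributions from $B$ cancel in \eqref{def:dir}, one checks that $\mathcal{D}(f,g) = \langle -\mathcal{L}^B f, g\rangle$. Consequently the quadratic form of $-\mathcal{L}^B$ is exactly the restriction of $\mathcal{D}$ to the subspace $V_B \subset L^2(V,\pi)$. Writing both eigenvalues as min--max values,
\begin{equation*}
\lambda_2^B = \min_{\substack{W \subset V_B\\ \dim W = 2}} \max_{\substack{f \in W\\ \|f\| = 1}} \mathcal{D}(f,f), \quad \lambda_2 = \min_{\substack{W \subset L^2(V,\pi)\\ \dim W = 2}} \max_{\substack{f \in W\\ \|f\| = 1}} \mathcal{D}(f,f),
\end{equation*}
the minimisation for $\lambda_2^B$ ranges over a strictly smaller family of subspaces, giving $\lambda_2^B \ge \lambda_2$.

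For the second inequality I would use the spectral expansion of the semigroup of the chain killed on $B$. Since under $P^G$ the walk starts from $\pi$ and the killed semigroup is self-adjoint on $L^2(V\setminus B,\pi|_{V\setminus B})$, expanding in the orthonormal eigenbasis $(v_j^B)$ yields
\begin{equation*}
P^G[H_B > t] = \sum_j e^{-\lambda_j^B t}\,\langle v_j^B, \mathbf{1}_{V\setminus B}\rangle^2.
\end{equation*}
Integrating in $t$ and using $\lambda_j^B \ge \lambda_1^B$ together with Parseval's identity $\sum_j \langle v_j^B, \mathbf{1}_{V\setminus B}\rangle^2 = \pi(V\setminus B) \le 1$ gives
\begin{equation*}
E[H_B] = \sum_j \frac{\langle v_j^B, \mathbf{1}_{V\setminus B}\rangle^2}{\lambda_j^B} \le \frac{1}{\lambda_1^B},
\end{equation*}
which is the desired upper bound on $\lambda_1^B$.

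The two bounds then concatenate to yield \eqref{e:AB2}. The only point requiring care -- the closest the argument comes to an obstacle -- is the identification in the first step of the restricted Dirichlet form with the quadratic form of $-\mathcal{L}^B$; this uses only the reversibility relation $\pi(x)p_{xy} = \pi(y)p_{yx}$ so that the cross terms between $B$ and $V \setminus B$ cancel, after which the min--max comparison and the spectral integration are both routine.
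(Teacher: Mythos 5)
Your proof is correct and follows the same decomposition as the paper: it splits \eqref{e:AB2} into $\lambda_2^B \geq \lambda_2$ and $\lambda_1^B \leq 1/E[H_B]$. The difference is that the paper dispatches both halves by citation --- eigenvalue interlacing from Haemers for the first, and the identity $\lambda_1^B = 1/E_{\sigma_B}[H_B]$ together with $E_{\sigma_B}[H_B]\geq E[H_B]$ from Aldous--Brown for the second --- whereas you give self-contained elementary arguments. Your Courant--Fischer argument for $\lambda_2^B \geq \lambda_2$ is sound (the key identification $\mathcal{D}(\tilde f,\tilde g) = \langle -\mathcal{L}^B f, g\rangle$ for functions vanishing on $B$, extended by zero, is a one-line check from \eqref{def:dir}); note also that since the minimisation in the Courant--Fischer characterisation of $\lambda_2^B$ ranges only over subspaces of $V_B$, you are really specialising interlacing to the second eigenvalue, which is all that is needed. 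Your spectral-expansion argument for $E[H_B]\leq 1/\lambda_1^B$ is likewise correct: $P^G[H_B>t]=\sum_j e^{-\lambda_j^B t}\langle v_j^B,\mathbf 1\rangle^2$ follows because $P^G$ starts from $\pi$ and the killed semigroup is self-adjoint, and Parseval with $\|\mathbf 1\|_{L^2(V\setminus B,\pi)}^2=\pi(V\setminus B)\leq 1$ closes the bound. Your route bypasses the exact Aldous--Brown identity, going directly to the inequality; this makes the argument more transparent at the cost of not exhibiting $\lambda_1^B$ exactly, which is not needed here. Either approach buys the same result; yours has the advantage of not relying on external references.
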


\begin{proof}[Proof of Lemma~\ref{l:AB}.]
  Since $\mathcal L^B$ can be viewed as a sub-matrix of  $\mathcal L$, by
  the eigenvalue interlacing inequality (cf.~\cite{Hae95},
    Corollary~2.2), we have $\lambda^B_2 \geq \lambda_2$. On the other
  hand,  by \cite{AB93} Lemma~2 and the paragraph following equation~(12),
  \begin{align}
    \lambda^B_1 = \frac{1}{E_{\sigma_B}[H_B]} \leq \frac{1}{E[H_B]}.
  \end{align}
  Combining these two inequalities we obtain Lemma~\ref{l:AB}.
\end{proof}

\begin{lemma}
  \label{l:quasigen}
  Suppose that for $t>0$ and $\varepsilon \in (0,1/2)$
  \begin{equation}
    \label{e:appass}
    e^{-t(\lambda_2^B-\lambda_1^B)} |V\setminus B|
    \Big(\sup_{x\in V\setminus B}
      \frac {\sigma_B (x)}{\sqrt{\pi (x)}}\Big)^2
      \le \varepsilon
      \inf_{x\in V\setminus B} \frac {\sigma_B (x)}{\sqrt{\pi (x)}}.
  \end{equation}
  Then,
  \begin{equation}
    \label{e:quasigen}
    \sup_{x, y \in {V} \setminus B} \big| P_x[X_{t} = y| H_B > {t}] -
    \sigma_B(y) \big|
    \leq 4 \varepsilon.
  \end{equation}
\end{lemma}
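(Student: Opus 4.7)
The plan is to exploit the spectral decomposition of $-\mathcal L^B$ on $L^2(V\setminus B, \pi|_{V\setminus B})$. Writing $c_i := \<v_i^B,\boldsymbol 1\>$, the standard kernel representation yields
\begin{equation*}
P_x[X_t = y,\, H_B > t] = \pi(y)\sum_{i} e^{-t\lambda_i^B} v_i^B(x) v_i^B(y),
\end{equation*}
and summing over $y\in V\setminus B$ gives $P_x[H_B > t] = \sum_i e^{-t\lambda_i^B} v_i^B(x) c_i$. Factoring the $i=1$ contribution out of numerator and denominator puts the conditional probability in the form $(\sigma_B(y) + E_N)/(1+E_D)$, where the identification of the leading term with $\sigma_B(y)$ uses \eqref{e:sigmavone} together with the easily checked identity $c_1 = 1/S$ for $S := \big(\sum_z \sigma_B(z)^2/\pi(z)\big)^{1/2}$. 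Explicitly,
\begin{equation*}
E_N = \frac{\pi(y)}{v_1^B(x)\,c_1}\sum_{i\ge 2} e^{-t(\lambda_i^B-\lambda_1^B)} v_i^B(x) v_i^B(y), \qquad E_D = \frac{1}{v_1^B(x)\,c_1}\sum_{i\ge 2} e^{-t(\lambda_i^B-\lambda_1^B)} v_i^B(x) c_i.
\end{equation*}

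Next I would bound $|E_N|$ and $|E_D|$ by applying Cauchy--Schwarz to the $i\ge 2$ tails, exploiting the spectral gap $\lambda_2^B-\lambda_1^B$ in the exponent together with the Parseval identities $\sum_i v_i^B(x)^2 = 1/\pi(x)$ (expand $\delta_x$ in the orthonormal basis) and $\sum_i c_i^2 = \pi(V\setminus B) \le 1$ (expand $\boldsymbol 1$). Substituting $v_1^B(x) = \sigma_B(x)/(S\pi(x))$ from \eqref{e:vonesigma} converts the resulting denominators into $\sigma_B(x)/\sqrt{\pi(x)}$, and the trivial inequality $S^2 \le |V\setminus B|\,\big(\sup_z \sigma_B(z)/\sqrt{\pi(z)}\big)^2$ brings the numerators into precisely the shape of the hypothesis \eqref{e:appass}. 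This delivers $|E_N|\le\varepsilon$ and $|E_D|\le\varepsilon$.

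Finally, since $\varepsilon < 1/2$ and $\sigma_B(y)\le 1$, the identity
\begin{equation*}
\frac{\sigma_B(y) + E_N}{1 + E_D} - \sigma_B(y) = \frac{E_N - \sigma_B(y) E_D}{1 + E_D}
\end{equation*}
gives $|P_x[X_t=y\mid H_B>t] - \sigma_B(y)| \le 2(|E_N| + \sigma_B(y)|E_D|) \le 4\varepsilon$, which is the claim. The only real piece of bookkeeping is the passage between the Perron eigenvector $v_1^B$ and $\sigma_B/\sqrt{\pi}$ via \eqref{e:vonesigma}: one must track the normalising constant $S$ carefully so that the ratio of sup to inf of $\sigma_B/\sqrt{\pi}$ that appears in \eqref{e:appass} comes out exactly, rather than, say, with two factors of the infimum. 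Apart from that, the argument is a clean spectral/Cauchy--Schwarz computation with no conceptual obstacle.
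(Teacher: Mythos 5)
Your proof is correct and takes essentially the same approach as the paper: both factor the Perron contribution out of the spectral decomposition of $P_x[X_t=y,H_B>t]$ and $P_x[H_B>t]$, bound the remaining tails via Cauchy--Schwarz and the spectral gap $\lambda_2^B-\lambda_1^B$, convert $v_1^B$ to $\sigma_B/\sqrt{\pi}$ through \eqref{e:vonesigma} (with the same normalisation $c_1=1/S$), and finish with the identity $\frac{\sigma_B(y)+E_N}{1+E_D}-\sigma_B(y)=\frac{E_N-\sigma_B(y)E_D}{1+E_D}$. The paper packages the tail sums as functions $\psi_f$ and bounds $\|\psi_f\|_{L^2(\pi)}$ before evaluating at $x$, while you apply Cauchy--Schwarz directly to the coefficient sums, but the estimates are identical.
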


\begin{proof}
  In the proof we will only use the eigenvalues and eigenvectors of
  $-\mathcal L^B$, therefore we omit the superscript $B$ from the notation.
  Similarly, we write $\sigma $ for $\sigma_B$ and define
  $m=|V\setminus B|$.
  By the usual spectral decomposition formula,
  \begin{equation}
    \begin{split}
      P_x[X_t=y, H_B>t] &= {(e^{t\mathcal L^B} \delta_y)(x)}=
      \sum_{k=1}^m e^{-\lambda_k t} v_k(x)\<v_k,\delta_y\>,
      \\P_x[H_B>t] &= (e^{t\mathcal L^B} \boldsymbol 1)(x)=
      \sum_{k=1}^m e^{-\lambda_k t} v_k(x)\<v_k,\boldsymbol 1\>,
    \end{split}
  \end{equation}
  where $\delta_y$ is the indicator function of $y$. For $f\in L^2(\pi )$,
  define $\psi_f=\sum_{k=2}^m e^{-(\lambda_k-\lambda_1)t} \<v_k,f\> v_k$.
  Then $e^{t\mathcal L^B}f=e^{-\lambda_1 t}(v_1 \<v_1,f\> +\psi_f)$, and
  by Pythagoras' theorem
  \begin{equation}
    \label{e:normpsi}
    \|\psi_f\|_{L^2(\pi )} \le e^{-(\lambda_2-\lambda_1)t}\|f\|_{L^2(\pi)}.
  \end{equation}
  Using this notation and the definition of the conditional probability,
  \begin{equation}
    P_x[X_t=y|H_B>t]=
    \frac{v_1(x)\<v_1,\delta_y\> + \psi_{\delta_y}(x)}
    {v_1(x)\<v_1,\boldsymbol 1\> + \psi_{\boldsymbol 1}(x)}.
  \end{equation}
  Applying \eqref{e:sigmavone} we get after an easy algebra
  \begin{equation}
    \label{e:apa}
    P_x[X_t=y|H_B>t]-\sigma(y)=
    \frac{\psi_{\delta_y}(x)-
      \frac{\<v_1,\delta_y\>}{\<v_1,\boldsymbol 1\>}
      \psi_{\boldsymbol 1}(x)}
    {v_1(x)\<v_1,\boldsymbol 1\>
      (1+\frac{\psi_{\boldsymbol 1}(x)}
        {v_1(x)\<v_1,\boldsymbol 1\>})}.
  \end{equation}

  Let $f$ stand either for $\delta_y$ or $\boldsymbol 1$. Then
  $\|f\|_{L^2(\pi )}\le 1$, which directly implies
  $|\psi_f(x)|\le \pi (x)^{-1/2}e^{-(\lambda_2-\lambda_1)t}$.
  From \eqref{e:normpsi}, \eqref{e:vonesigma}, using the assumption
  \eqref{e:appass}, we obtain
  \begin{equation}
    \frac{\psi_{f}(x)} {v_1(x)\<v_1,\boldsymbol 1\>}
    \le \frac{e^{-(\lambda_2-\lambda_1)t}
      \sum_z\frac{\sigma(z)^2}{\pi (z)}}
    {\frac{\sigma (x)}{\sqrt{\pi (x)}}}
    \le \frac{e^{-(\lambda_2-\lambda_1)t}
      m \sup_z\big(\frac{\sigma(z)}{\sqrt {\pi (z)}}\big)^2}
    {\inf_z\frac{\sigma (z)}{\sqrt{\pi (z)}}} \le \varepsilon.
  \end{equation}
  Using $\varepsilon <1/2$, this implies that the absolute value of \eqref{e:apa}
  can be bounded from above by
  \begin{equation}
    \frac {2(|\psi_{\delta_y}(x)|+|\psi_{\boldsymbol
          1}(x)|)}{v_1(x)\<v_1,\boldsymbol 1\>}\le 4\varepsilon.
  \end{equation}
  This completes the proof of Lemma~\ref{l:quasigen}.
\end{proof}


\bibliographystyle{jcamsalpha}
\bibliography{regular}

\end{document}